\DeclareMathOperator{\tr}{tr}
\DeclareMathOperator{\pn}{{\sf p}}
\newcommand{\CH}{\text{CH}}
\newcommand{\res}{\text{res}}
\newcommand{\soc}{\text{soc}}
\newcommand{\NL}{\text{NL}}
\newcommand{\N}{\mathbb{N}}
\newcommand{\Z}{\mathbb{Z}}
\newcommand{\C}{\mathbb{C}}
\newcommand{\Q}{\mathbb{Q}}
\newcommand{\R}{\mathbb{R}}
\newcommand{\K}{\mathbb{K}}
\renewcommand{\P}{\mathbb{P}}
\newcommand{\T}{\mathbb{T}}
\newcommand{\U}{\mathcal{U}}
\renewcommand{\Im}{\mathrm{I}\text{m}}
\newcommand{\dR}{\text{dR}}
\newcommand{\prim}{\text{prim}}
\newcommand{\LT}{\text{LT}}
\newcommand{\p}[3]{\left #1 #3 \right #2}
\newtheorem{thm}{Theorem}[section]
\newtheorem{cor}[thm]{Corollary}
\newtheorem{prop}[thm]{Proposition}
\theoremstyle{definition}
\newtheorem{rmk}{Remark}[section]
\newtheorem{ex}{Example}[section]
\newtheorem{dfn}{Definition}[section]
\renewenvironment{proof}{{\bfseries \noindent Proof} }{ \qed \\}
\newcommand*\colvec[1]{ 
    \global\colveccount#1
    \begin{pmatrix} 
    \colvecnext 
}
\def\colvecnext#1{
    #1
    \global\advance\colveccount-1 
    \ifnum\colveccount>0 
        \\
        \expandafter\colvecnext 
    \else 
        \end{pmatrix}
    \fi
}
\begin{document}

%\include{chapters/notations}

%------Chapter on algebraic cycles-----------------------
\def\gru{\mu} %the group of d-th roots of unity 
\def\pg{{ \sf S}}               %permutation group
\def\TS{\mathlarger{\bf T}}                %Tangent space
\def\NB{{\mathlarger{\bf N}}}
\def\group{{\sf G}}
\def\NLL{{\rm NL}}   %Noether-Lefschetz locus

\def\plc{{ Z_\infty}}    %a section of X by a projective space of dimension n/2+1
\def\pola{{u}}      %polarization
\newcommand\licy[1]{{\mathbb P}^{#1}} %linear projective cycle
\newcommand\aoc[1]{Z^{#1}}     % Aoki-Shioda cycle
\def\HL{{\rm HL}}     %Hodge locus
\def\NLL{{\rm NL}}   %Noether-Lefschetz locus

%----------------General Math Notations-------------------------------------
\def\Z{\mathbb{Z}}                   %Integer  numbers
\def\Q{\mathbb{Q}}                   %Rational  numbers
\def\C{\mathbb{C}}                   %Complex numbers
\def\N{\mathbb{N}}                   %natural numbers
\def\uhp{{\mathbb H}}                %upper half plane
\def\A{\mathbb{A}}                   %affine space C^n
\def\dR{{\rm dR}}                    %The subindex dR standing for de Rham cohomology.
\def\F{{\cal F}}                     %A foliation
\def\Sp{{\rm Sp}}                    %Symplectic group
\def\Gm{\mathbb{G}_m}                 %The multiplicative group
\def\Ga{\mathbb{G}_a}                 %The additive  group
\def\Tr{{\rm Tr}}                      %Trace map in Algebraic de Rham cohomology
\def\tr{{{\mathsf t}{\mathsf r}}}                 %Transposition of matrices
\def\spec{{\rm Spec}}            %The spectrume
\def\ker{{\rm ker}}              %kernel
\def\GL{{\rm GL}}                %The liner group
\def\ker{{\rm ker}}              %kernel
\def\coker{{\rm coker}}          %cokernel
\def\im{{\rm Im}}               %Image
\def\coim{{\rm Coim}}            %coimage
\def\p{{\sf  p}}
\def\U{{\cal U}}   %covering

\def\weig{{\nu}}
\def\r{{ r}}                       %dimension of the moduli space of hypersurfaces.
%----------------Gauss-Manin Connection in disguise---------------------
\def\k{{\sf k}}                     %Arbitrary field
\def\ring{{\sf R}}                   %A ring
\def\X{{\sf X}}                      %families of varieties
\def\Ua{{   L}}                      %families of affine varieties
\def\T{{\sf T}}                      %Moduli of enhanced  varieties
\def\asone{{\sf A}}                  %affine space A^{n+1} defined over a ring

\def\Ts{{\sf S}}
\def\cmv{{\sf M}}                    %Classical moduli of varieties
\def\BG{{\sf G}}                       %Borel Algebraic Group
\def\podu{{\sf pd}}                   %Poincare dual
\def\ped{{\sf U}}                    %Period Domain
\def\per{{\bf  P}}                   %period matrix or fundamental system of GMC
\def\gm{{  A}}                    %Gauss-Manin connection
\def\gma{{\sf  B}}                   %Gauss-Manin connection
\def\ben{{\sf b}}                    %Betti number

\def\Rav{{\mathfrak M }}                     % Space of modular vector fields
\def\Ram{{\mathfrak C}}                     % Space of constant vector fields
\def\Rap{{\mathfrak G}}                     % Space of vector fields arising from group  action. 

\def\mov{{\sf  m}}                    %Fixed dimension of the cohomology
\def\Yuk{{\sf C}}                     %Yukawa and generalizations 
\def\Ra{{\sf R}}                      %Ramanujan type vector field
\def\hn{{ h}}                         %Hodge numbers
\def\cpe{{\sf C}}                     %Constant periods
\def\g{{\sf g}}                       %An element of the Borel group
\def\t{{\sf t}}                       %An element \T
\def\pedo{{\sf  \Pi}}                  %Period domain before discrete group action

\def\Der{{\rm Der}}                   %Derivations
\def\MMF{{\sf MF}}                    %Moduli of modular foliations
\def\codim{{\rm codim}}                %codimension
\def\dim{{\rm    dim}}                %dimension
\def\Lie{{\rm Lie}}                   %Lie algebra of a group.

\def\u{{\sf u}}                       %An element of the period domain

\def\imh{{  \Psi}}                 %intersection matrix in homology
\def\imc{{  \Phi }}                  %intersection matrix in cohomology
\def\stab{{\rm Stab }}               %stablizer 
\def\Vec{{\rm Vec}}                 %space of vector fields

\def\Fg{{\sf F}}     %Genus g topological partition function
\def\hol{{\rm hol}}  %holomorphic
\def\non{{\rm non}}  %non-holomorphic
\def\alg{{\rm alg}}  %algebraic
\def\tra{{\rm tra}}  %transcendental

\def\bcov{{\rm \O_\T}}       %The ring of modular-type functions

\def\leaves{{\cal L}}        %space of leaves  

\def\cat{{\cal A}}              %category
\def\im{{\rm Im}}               %Image

%%%%%%%%%%%%%%%%%%%%%%%%%%%%%%%%%%%%%%%%%%%%%%%%%%%%%%%%%%%%%%%%%
\def\pn{{\sf p}}              %Periods of Hodge cycles
\def\Pic{{\rm Pic}}           %Picard group 
\def\free{{\rm free}}         %Free part
\def \NS{{\rm NS}}    %Neron-Severi group 
\def\tor{{\rm tor}}
\def\codmod{{\xi}}    %Codimension of the Hodge loci

%---------------OLD Notation-------------------
\def\GM{{\rm GM}}

\def\perr{{\sf q}}        %period matrix.....
\def\perdo{{\cal K}}   %period domain
\def\sfl{{\mathrm F}} %Space of filtrations
\def\sp{{\mathbb S}}  %Sphere

\newcommand\diff[1]{\frac{d #1}{dz}} %Differential operator
\def\End{{\rm End}}              %Endomorphism group

\def\sing{{\rm Sing}}            %The set of singularities
\def\cha{{\rm char}}             %Charracteristic
\def\Gal{{\rm Gal}}              %The Galois group
\def\jacob{{\rm jacob}}          %the Jacobian ideal
\def\tjurina{{\rm tjurina}}      %the tjurina ideal
\newcommand\Pn[1]{\mathbb{P}^{#1}}   %Projective space of dimension #1
\def\P{\mathbb{P}}
\def\Ff{\mathbb{F}}                  %Finite field

\def\O{{\cal O}}                     %ring of integers of a number field

\def\ring{{\mathsf R}}                         %A ring
\def\R{\mathbb{R}}                   %real numbers

\newcommand\ep[1]{e^{\frac{2\pi i}{#1}}}% unipotent numbers
\newcommand\HH[2]{H^{#2}(#1)}        %Hodge structures
\def\Mat{{\rm Mat}}              %Matrices
\newcommand{\mat}[4]{
     \begin{pmatrix}
            #1 & #2 \\
            #3 & #4
       \end{pmatrix}
    }                                %two by two matrices
\newcommand{\matt}[2]{
     \begin{pmatrix}                 % one by two matrix
            #1   \\
            #2
       \end{pmatrix}
    }
\def\cl{{\rm cl}}                %Chern class

\def\hc{{\mathsf H}}                 %The set of Hodge cycles.
\def\Hb{{\cal H}}                    %Hodge bundle
\def\pese{{\sf P}}                  %Period set

\def\PP{\tilde{\cal P}}              %the period domain/ discrete group
\def\K{{\mathbb K}}                  %Field representing R or C

\def\M{{\cal M}}
\def\RR{{\cal R}}
\newcommand\Hi[1]{\mathbb{P}^{#1}_\infty}%the hyperplane at infinity
\def\pt{\mathbb{C}[t]}               %Polynomials in t
\def\gr{{\rm Gr}}                %graded pieces
\def\Im{{\rm Im}}                %imaginary
\def\Re{{\rm Re}}                %Real
\def\depth{{\rm depth}}
\newcommand\SL[2]{{\rm SL}(#1, #2)}    %SL(2,Z)
\newcommand\PSL[2]{{\rm PSL}(#1, #2)}  %PSL(2,Z)
\def\Resi{{\rm Resi}}              %Residue

\def\L{{\cal L}}                     %The moduli of polarized lattices in a
                                     %fixed vector spaces.
\def\Aut{{\rm Aut}}              %Automorphism group of a vectorspace
\def\any{R}                          %Any subring of the field of complex
                                     %numbers.
\newcommand\ovl[1]{\overline{#1}}    %Conjugation of #1.

\newcommand\mf[2]{{M}^{#1}_{#2}}     %New modular functions
\newcommand\mfn[2]{{\tilde M}^{#1}_{#2}}     %New modular functions

\newcommand\bn[2]{\binom{#1}{#2}}    %Binomial
\def\ja{{\rm j}}                 %j of a two by two matrix
\def\Sc{\mathsf{S}}                  %Simple cycles
\newcommand\es[1]{g_{#1}}            %Eisenstein series
\newcommand\V{{\mathsf V}}           %Milnor vector space
\newcommand\WW{{\mathsf W}}          %Similar to Milnor vector space
\newcommand\Ss{{\cal O}}             %Structural sheaf
\def\rank{{\rm rank}}                %rank of a module
\def\Dif{{\cal D}}                   %Differentials
\def\gcd{{\rm gcd}}                  %greatest common divisor
\def\zedi{{\rm ZD}}                  %zero divisors of a module
\def\BM{{\mathsf H}}                 %Brieskorn module
\def\plf{{\sf pl}}                             %Picard-Lefschetz formula
\def\sgn{{\rm sgn}}                      %sign
\def\diag{{\rm diag}}                   %diagonal matrix
\def\hodge{{\rm Hodge}}
\def\HF{{ F}}                                %The hodge filtration of the brieskon module
\def\WF{{ W}}                               %The weight filtration of the brieskon module
\def\HV{{\sf HV}}                                %humbert variety
\def\pol{{\rm pole}}                               %pole divisor
\def\bafi{{\sf r}}
\def\id{{\rm id}}                               %identity
\def\gms{{\sf M}}                           %Gauss-Manin system
\def\Iso{{\rm Iso}}                           %Gauss-Manin system

\def\hl{{\rm L}}    %holomorphic limit
\def\imF{{\rm F}}
\def\imG{{\rm G}}

\begin{center}
{\LARGE\bf  Small codimension components of the Hodge locus containing the Fermat variety
}
%\footnote{ 
%Math. classification: Primary 14D07, 13H10; Secondary 13P10, 14C25
%\\
%Keywords: Hodge locus, Artinian Gorenstein algebra, Hodge cycle, periods. 
%}
\\
\vspace{.25in} {\large {\sc Roberto  Villaflor Loyola}}\footnote{
Instituto de Matem\'atica Pura e Aplicada, IMPA, Estrada Dona Castorina, 110, 22460-320, Rio de Janeiro, RJ, Brazil,
{\tt rvilla@impa.br}}
\end{center}

% \tableofcontents

\def\pn{{\sf p}}
\def\rootsG{{\sf G}}
\def\NLL{{\rm NL}}   %Noether-Lefschetz locus

\begin{abstract}
We characterize the smallest codimension components of the Hodge locus of smooth degree $d$ hypersurfaces of the projective space $\P^{n+1}$ of even dimension $n$, passing through the Fermat variety (with $d\neq 3,4,6$). They correspond to the locus of hypersurfaces containing a linear algebraic cycle of dimension $\frac{n}{2}$. Furthermore, we prove that among all the local Hodge loci associated to a non-linear cycle passing through Fermat, the ones associated to a complete intersection cycle of type $(1,1,\ldots,1,2)$ attain the minimal possible codimension of their Zariski tangent spaces. This answers a conjecture of Movasati, and generalizes a result of Voisin about the first gap between the codimension of the  components of the Noether-Lefschetz locus to arbitrary dimension, provided that they contain the Fermat variety.  
\end{abstract}

\section{Introduction}
\label{intro}
Let $\pi:X\rightarrow T$ be the family of all smooth degree $d$ hypersurfaces of $\P^{n+1}$, of even dimension $n$. Denote by  $\HL_{n,d}\subseteq T$ the set of parameters $t\in T$ corresponding to hypersurfaces containing \textit{non-trivial} Hodge cycles (with non-trivial primitive part), i.e. such that $H^{\frac{n}{2},\frac{n}{2}}(X_t,\Z)_\prim\neq 0$.
We know after \cite{gri83III} and \cite{cadeka} that for $d\ge 2+\frac{4}{n}$, $\HL_{n,d}$ is a countable union of properly contained algebraic subvarieties of $T$ called the \textit{Hodge locus}. This locus was first considered by Grothendieck \cite{gro66} in order to study the Hodge conjecture on families, the so called variational Hodge conjecture. This conjecture claims that the algebraicity of a Hodge cycle spreads along all flat deformations which remain Hodge. In terms of the Hodge locus, we know that at every point $t_0\in \HL_{n,d}$, every component of the germ of analytic variety $(\HL_{n,d},t_0)$ corresponds to the space of parameters $t\in (T,t_0)$ where a section $\lambda$ of the local system $R^n\pi_*\Z$ is a non-trivial Hodge cycle, i.e. $\lambda(t)\in H^{\frac{n}{2},\frac{n}{2}}(X_t,\Z)$ and $\lambda(t)_\prim\neq 0$. We will denote this \textit{local Hodge locus} by $V_\lambda$ (it has a natural structure of analytic scheme, that might be non-reduced). In this setting we can rephrase the variational Hodge conjecture as follows: $\lambda(t_0)_\prim\in H^{\frac{n}{2},\frac{n}{2}}(X_{t_0},\Z)_\prim$ is an algebraic class (i.e. $\lambda(t_0)_\prim=[Z]_\prim$ is the primitive part of the cohomology class of an algebraic cycle $Z\in \CH^\frac{n}{2}(X_{t_0})$) if and only if $\lambda(t)_\prim$ is an algebraic class for all $t\in V_\lambda$. Therefore, the variational Hodge conjecture can be regarded as a question about the germs of analytic subvariety of the locus of Hodge cycles over $\HL_{n,d}$.

For $n=2$, the study of the Hodge locus is a classical subject in algebraic geometry, since it corresponds to the \textit{Noether-Lefschetz locus} $\NL_d$ which parametrizes surfaces of degree $d\ge 4$ contained in $\P^3$ with Picard number bigger than 1 (i.e. those surfaces not satisfying the classical Noether-Lefschetz theorem). The development of the infinitesimal variations of Hodge structures \cite{gri83III} provided the necessary tool to study the first order approximations of the local Hodge loci $V_\lambda$. As a consequence, a series of results were discovered about the components of the Noether-Lefschetz locus until the beginning of the nineties. On one hand, just looking at the local Hodge loci we see that the Noether-Lefschetz locus is locally defined by $h^{2,0}={d-1\choose 3}$ equations, providing us with an upper bound for the codimension of its components. The components of codimension exactly $h^{2,0}$ are called \textit{general components} while the other components are called \textit{special components}. It was proved by Ciliberto, Harris and Miranda \cite{CHM88} that the Noether-Lefschetz locus has infinitely many general components which are dense in $T$ with respect to the analytic topology. On the other hand, the lower bound for the codimension of the special components is a subtle result due to Green \cite{greenkoszul2,green1988}, corresponding to $d-3$ which is attained by the locus of surfaces containing a line. Harris conjectured that the Noether-Lefschetz locus must have only finitely many special components, leading Voisin to an intensive study of these components. In \cite{voisin1988}, Voisin (and independently Green \cite{green1989}) proved that the unique component of the Noether-Lefschetz locus of minimal codimension corresponds to the locus of surfaces containing a line. Later Voisin \cite{voisin89} proved that there are no components of codimension bigger than $d-3$ and less than $2d-7$, and the unique component of codimension $2d-7$ corresponds to the locus of surfaces containing a conic. This result implied Harris conjecture for $d=5$. The conjecture for $d=6,7$ was also proved by Voisin \cite{voisin90}. Finally, Voisin \cite{voisin1991} disproved Harris conjecture for a high (not explicitly computed) degree. 

The study of the classical Noether-Lefschetz locus has evolved in several directions. One direction has been the use of the infinitesimal variations of Hodge structures to study the components of other families of surfaces. For instance complete intersection surfaces \cite{kim1991}, Jacobian elliptic surfaces \cite{kloosterman2007}, and surfaces inside complete normal toric threefolds \cite{bruzzo2017existence,bruzzo2018noether}.   

On the other hand, in the direction of higher dimensional hypersurfaces of $\P^{n+1}$ the knowledge about the Hodge locus is much more limited. For instance the variational Hodge conjecture has been verified for some restricted types of algebraic cycles (see \cite{Bloch1972} for semi-regular local complete intersection algebraic cycles, \cite{Otwinowska2003} and \cite{Dan14} for complete intersection algebraic cycles, \cite{MV} and \cite{VillaPCIAC} for some non-complete intersection algebraic cycles). While global results about the Hodge locus, such as the sharp bounds for the codimension of its components, and the characterization of the special components, remain open. 

The best known result about the sharp bounds and characterization of special components of the Hodge locus is due to Otwinowska \cite{Otw02}.
As in Voisin's proof for the classical Noether-Lefschetz locus case, Otwinowska uses the algebraic description of the infinitesimal variations of Hodge structures in terms of the product map in the Jacobian ring. However, due to the increment of the complexity of these rings in higher dimensions, she develops deep and technical results to study the Hilbert function of Artin Gorenstein algebras. As a result, Otwinowska manage to generalize Voisin's results to higher dimensions but only asymptotically \cite[Theorem3]{Otw02}. More precisely, she proves that for $d\gg n$, every component $\Sigma$ of $\HL_{n,d}$ satisfies
\begin{equation}
\label{cota}
\codim_T \ \Sigma\ge {\frac{n}{2}+d\choose d}-(\frac{n}{2}+1)^2,
\end{equation}
with equality if and only if $\Sigma$ is the locus of hypersurfaces containing a linear subvariety of dimension $\frac{n}{2}$.  

More recently, Movasati \cite[Theorem 2]{GMCD-NL} considered the same problem restricted to those components passing through the Fermat variety. He proved that all such components satisfy \eqref{cota} for all degrees $d\ge 2+\frac{4}{n}$. Note that since the Fermat variety contains a huge amount of algebraic cycles, there are lots of local components passing through it, and furthermore in the case of surfaces, all special components of small codimension pass trough the Fermat surface. Thus Otwinowska and Movasati's results provide evidences that \eqref{cota} should be the sharp lower bound for the components of the Hodge locus. In order to get the mentioned bound, Voisin, Green, Otwinowska and Movasati, they all found a bound for the Zariski tangent space of all local Hodge loci of the form $V_\lambda$ parametrizing some local component of the Hodge locus. In particular Movasati proved that if $0\in\HL_{n,d}$ denotes the Fermat variety, then
\begin{equation}
\label{cotatangent}
\codim_{T_0T} \ T_0V_\lambda\ge {\frac{n}{2}+d\choose d}-(\frac{n}{2}+1)^2,
\end{equation}
for all local Hodge loci $V_\lambda$ passing trough the Fermat variety. In fact he proves this bound without assuming $\lambda$ is an integral class and so the bound above holds for any $\lambda\in H^{\frac{n}{2},\frac{n}{2}}(X)_\prim$ (we prove this fact again in Proposition \ref{prop6}). And so all the non-reduced local Hodge loci passing trough Fermat (and the ones which are singular at Fermat) have strictly bigger codimension. After this result, and in view of Voisin/Green and Otwinowska's results, Movasati \cite[Conjecture 18.8]{ho13} conjectures that the only local Hodge loci attaining the equality in \eqref{cota} are those of the form $V_{\lambda}$ with $\lambda(0)_\prim=a[\P^\frac{n}{2}]_\prim$ for some $\P^\frac{n}{2}\subseteq X_0$ and some $a\in\Q^\times$. In this article we prove Movasati's conjecture.

\begin{thm}
\label{thm1} Let $n$ be an even number and $d$ be such that $\zeta_d+\zeta_d^{-1}\notin\Q$ (i.e. $d\neq 1,2,3,4,6$). Consider a component $\Sigma$ of $\HL_{n,d}$ passing through the \textit{Fermat variety} $0\in \HL_{n,d}$ given by $X_0=\{F=0\}\subseteq\P^{n+1}$ and $F=x_0^d+\cdots+x_{n+1}^d$. The equality in \eqref{cota} holds if and only if 
\begin{equation}
\label{complin}
\Sigma=\left\{t\in T: X_t \text{ contains a linear subvariety of dimension }\frac{n}{2}\right\}. 
\end{equation}
In fact, for any polydisc $0\in\Delta\subseteq T$ and for any $\lambda\in \Gamma(\Delta,R^n\pi_*\Z)$ such that $0\in V_\lambda$,
\begin{equation}
\label{igualdad1}
\codim_{T_0T} \ T_0V_\lambda= {\frac{n}{2}+d\choose d}-(\frac{n}{2}+1)^2
\end{equation}
if and only if $\lambda(0)_\prim=a[\P^\frac{n}{2}]_\prim$ for some $\P^\frac{n}{2}\subseteq X_0$ and some $a\in\Q^\times$.
\end{thm}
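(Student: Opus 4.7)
The plan is to reduce the computation of $T_0V_\lambda$ to an algebraic question in the Jacobian ring $R:=\C[x_0,\ldots,x_{n+1}]/(x_0^{d-1},\ldots,x_{n+1}^{d-1})$ of the Fermat polynomial $F=x_0^d+\cdots+x_{n+1}^d$. Setting $\sigma:=(\frac{n}{2}+1)(d-2)$, Griffiths' description gives $H^{\frac{n}{2},\frac{n}{2}}(X_0)_\prim\cong R^\sigma$ and $H^{\frac{n}{2}-1,\frac{n}{2}+1}(X_0)_\prim\cong R^{\sigma+d}$, and Proposition~\ref{prop6} identifies
\[
T_0V_\lambda\;\cong\;\ker\bigl(\mu_{\lambda_R}:R^d\to R^{\sigma+d}\bigr),
\]
where $\lambda_R\in R^\sigma$ represents $\lambda(0)_\prim$. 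Thus $\codim_{T_0T}T_0V_\lambda=\dim(R/\mathrm{Ann}(\lambda_R))^d$, and the problem becomes: for which $\lambda_R$ does the Artin Gorenstein quotient $A_\lambda:=R/\mathrm{Ann}(\lambda_R)$ (which has socle in degree $\sigma$, by Macaulay duality) achieve the minimal value $\dim A_\lambda^d=M:=\binom{\frac{n}{2}+d}{d}-(\frac{n}{2}+1)^2$?

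\textbf{Rigidity of the extremal annihilator.} For a linear $\P^{\frac{n}{2}}=V(\ell_0,\ldots,\ell_{n/2})\subseteq X_0$ and $\lambda_R=a[\P^{\frac{n}{2}}]_\prim$ with $a\neq 0$, a direct calculation gives $\mathrm{Ann}(\lambda_R)=(\ell_0,\ldots,\ell_{n/2})$ in $R$, and an inclusion--exclusion using the tensor-product structure $R=\bigotimes_i\C[x_i]/(x_i^{d-1})$ (exploiting $2(d-1)>d$) yields $\dim(R/(\ell_0,\ldots,\ell_{n/2}))^d=M$, so the ``if'' direction is immediate. For the converse, the key step is to sharpen Movasati's proof of \eqref{cotatangent} into a rigidity statement: equality $\dim A_\lambda^d=M$ should force $\dim A_\lambda^1=\frac{n}{2}+1$, so that $\mathrm{Ann}(\lambda_R)^1$ consists of $\frac{n}{2}+1$ independent linear forms $\ell_0,\ldots,\ell_{n/2}$; combining with Gorenstein duality and a Hilbert-function comparison in all degrees $k\le\sigma$ then yields $\mathrm{Ann}(\lambda_R)=(\ell_0,\ldots,\ell_{n/2})$, so $\lambda_R$ is proportional to the unique nonzero socle class of $R/(\ell_0,\ldots,\ell_{n/2})$.

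\textbf{From algebra back to geometry.} The socle class of $R/(\ell_0,\ldots,\ell_{n/2})$ coincides with the Jacobian-ring representative of $[\P^{\frac{n}{2}}]_\prim$ only when $\P^{\frac{n}{2}}:=V(\ell_0,\ldots,\ell_{n/2})\subseteq X_0$; otherwise it defines a ``virtual'' class in $R^\sigma$ that need not be rational in $H^{\frac{n}{2},\frac{n}{2}}(X_0,\Q)_\prim$. The integrality of $\lambda$ forces $\lambda_R$ to be stable under the natural Galois action of $(\Z/d\Z)^\times$ on the $\mu_d^{n+2}$-weight decomposition of $R^\sigma$, and the hypothesis $\zeta_d+\zeta_d^{-1}\notin\Q$ (excluding $d=1,2,3,4,6$) is exactly what makes this action nontrivial enough to force the $\ell_i$ to take the Fermat-compatible form $\ell_i=x_{\tau(2i)}-\zeta_ix_{\tau(2i+1)}$, for some permutation $\tau$ of the coordinates and $d$-th roots $\zeta_i$ with $\zeta_i^d=-1$. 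Hence $\P^{\frac{n}{2}}\subseteq X_0$, and $a\in\Q^\times$ follows by direct comparison with the algebraic cycle class $[\P^{\frac{n}{2}}]_\prim$.

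\textbf{Main obstacle.} The rigidity claim in the second paragraph is the central technical difficulty: an element $\lambda_R\in R^\sigma$ achieving $\dim A_\lambda^d=M$ could a priori have an annihilator with extra non-linear generators, giving extremal Gorenstein quotients other than complete intersections of linear forms. Otwinowska~\cite{Otw02} established the analogous bound only asymptotically (for $d\gg n$) via a delicate study of Hilbert functions of general Artin Gorenstein algebras, and those arguments are not tight enough for all $d$ satisfying the hypothesis. The refinement I have in mind must exploit in full the highly symmetric complete-intersection product structure of the Fermat Jacobian ring and its explicit $\mu_d^{n+2}$-grading in order to control the shape of $\mathrm{Ann}(\lambda_R)$ uniformly in $d$, after which the Galois/rationality step of the third paragraph converts the algebraic conclusion into the geometric statement of the theorem.
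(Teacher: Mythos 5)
Your skeleton (equality in \eqref{igualdad1} forces $\frac{n}{2}+1$ independent linear forms in the annihilator, the annihilator is then generated by them, and the class is identified with a multiple of a linear cycle class) matches the structure of the paper, but the two places where you leave the argument open are exactly where the content lies, and you have misjudged which of them is the hard one. The ``rigidity'' step that you single out as the main obstacle is in fact handled at the Fermat point by elementary monomial combinatorics, not by Otwinowska-type Hilbert-function machinery: since $J^F=\langle x_0^{d-1},\ldots,x_{n+1}^{d-1}\rangle$ is monomial, one passes to the leading-term ideal $\langle\LT(J^{F,\lambda})\rangle$ for a lexicographic order (Propositions \ref{hilb} and \ref{prophos}), and the extremal value of the Hilbert function in degree $d$ forces $\langle\LT(J^{F,\lambda})\rangle_1$ to contain $\frac{n}{2}+1$ variables (Proposition \ref{prop6}, using $d\ge 2+\frac{6}{n}$ and duality in the Gorenstein algebra); a subsequent Gr\"obner-basis division argument (Proposition \ref{prop10}) shows moreover that the linear forms are necessarily binomials $x_{2i-2}-a_{2i-2}x_{p_{2i-2}}$ pairing the variables, and that $J^{F,\lambda}$ is generated by them together with $J^F$. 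So this gap in your write-up is fillable, but you have not filled it, and your appeal to the complete-intersection structure of $R$ is only a statement of intent.

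The genuine gap is your third paragraph. Asserting that integrality of $\lambda$ makes $\lambda_R$ ``stable under the natural Galois action of $(\Z/d\Z)^\times$ on the weight decomposition of $R^\sigma$'' is not justified and is not true in any naive sense: the rational (Betti) structure and the character/monomial decomposition of $H^n_{\dR}$ of the Fermat variety are compared through nontrivial CM periods, so rationality of the cohomology class imposes conditions on the coefficients of $P_\lambda$ only through period computations, not through invariance of the polynomial itself. The paper's proof of Theorem \ref{thm1.1} makes this precise: it computes, via the closed formula \eqref{plambda} for $P_\lambda$ and the explicit polynomials of the linear cycles $\P^{\frac{n}{2}}_\alpha$ (Propositions \ref{prop8} and \ref{prop9}, from \cite{VillaPCIAC}), the intersection numbers of $\lambda$ with \emph{all} linear cycles in $X_0$, whose rationality yields the arithmetic constraint \eqref{arthcond1}; a cross-ratio argument (Proposition \ref{prop11}) then uses $\zeta_d+\zeta_d^{-1}\notin\Q$ to force $a_i^d+1=0$, i.e.\ $V(L_1,\ldots,L_{\frac{n}{2}+1})\subseteq X_0$, after which Proposition \ref{prop7} and Corollary \ref{igualjotas} give $\lambda(0)_\prim=a[\P^{\frac{n}{2}}]_\prim$ with $a\in\Q^\times$. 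That this step cannot be soft is shown by \S\ref{sec5.5}: for $d=3,4,6$ there exist classes with all periods over linear cycles rational, with $\dim_\C J^{F,\lambda}_1=\frac{n}{2}+1$, and yet $V(J^{F,\lambda}_1)\not\subseteq X_0$. As it stands, your proposal establishes the easy ``if'' direction and leaves both halves of the ``only if'' direction unproved.
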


We remark that the subtle point in Voisin/Green and Otwoinowska's results is to prove that the bound \eqref{cota} holds at a generic point of the Hodge locus. Once this is established, the characterization of the components attaining this bound follows from a simple geometric argument. To give an idea of how this argument works we will recall the definition of Artinian Gorenstein algebra associated to a Hodge cycle of a smooth degree $d$ hypersurface of $\P^{n+1}$. This algebra was introduced by Voisin \cite{voisin89} for the case of surfaces, and by Otwinowska \cite{Otwinowska2003} for arbitrary dimension hypersurfaces. Let $X=\{F=0\}\subseteq\P^{n+1}$ be a smooth degree $d$ hypersurface of even dimension $n$. By Griffiths work \cite{gr69} we can use the residue map to identify pieces of the Jacobian ring
$$
R^F:=\C[x_0,\ldots,x_{n+1}]/J^F \ , \hspace{1cm}J^F:=\left\langle \frac{\partial F}{\partial x_0},\ldots,\frac{\partial F}{\partial x_{n+1}}\right\rangle,
$$
with the $(p,n-p)$ subgroups of $H^n_\dR(X)_\prim$ as follows
$$
R^F_{d(n-p+1)-n-2}\simeq F^pH^n_\dR(X)_\prim/F^{p+1}H^n_\dR(X)_\prim\simeq H^{p,n-p}(X)_\prim
$$
$$
P\mapsto \res\left(\frac{P\Omega}{F^{n-p+1}}\right)^{p,n-p}
$$
where $\Omega:=\sum_{i=0}^{n+1}(-1)^ix_idx_0\wedge\cdots\widehat{dx_i}\cdots\wedge dx_{n+1}$. In particular we can identify
$$
H^{\frac{n}{2},\frac{n}{2}}(X)_\prim\simeq R_\sigma \ , \ \text{ for }\sigma:=(d-2)(\frac{n}{2}+1).
$$
Let $\lambda\in H^{\frac{n}{2},\frac{n}{2}}(X,\Z)$ be a non-trivial Hodge cycle corresponding to a polynomial $P_\lambda\in R_\sigma$ such that
$$
\lambda_\prim=\res\left(\frac{P_\lambda\Omega}{F^{\frac{n}{2}+1}}\right)^{\frac{n}{2},\frac{n}{2}}.
$$
After \cite{CarlsonGriffiths1980} we can identify the Zariski tangent space of $V_\lambda$ with the inverse image in $\C[x_0,\ldots,x_{n+1}]_d$ of the kernel of the multiplication map $R^F_d\xrightarrow{\cdot P_\lambda}R^F_{\sigma+d}$ (see also \S \ref{sec2}). This motivates the definition of the \textit{Artinian Gorenstein algebra associated to $\lambda$} as 
$$
R^{F,\lambda}:=\C[x_0,\ldots,x_{n+1}]/J^{F,\lambda}$$
where
\begin{equation}
\label{defjflambda}
J^{F,\lambda}:=\{Q\in \C[x_0,\ldots,x_{n+1}]: P_\lambda\cdot Q=0\in R^F\}=(J^F:P_\lambda).
\end{equation}
Using elementary properties of this Artin Gorenstein algebra, Voisin and Otwinowska deduce that if \eqref{cota} is attained by some local Hodge locus $V_{\lambda}$ parametrizing the germ of analytic variety $(\Sigma,t)$ at some generic point $t\in\Sigma$, then 
\begin{equation}
\label{dimJFdelta1}
\dim_\C \ J^{F,\lambda(t)}_1=\frac{n}{2}+1.    
\end{equation}
Considering the induced map $L:\Sigma\rightarrow\mathbb{G}(\frac{n}{2},n+1)$ given by $L(t)=V(J^{F,\lambda(t)}_1)\subseteq\P^{n+1}$, Voisin \cite[\S 3]{voisin1988} and Otwinowska \cite[Theorem 3, Proposition 6]{Otw02} conclude (by a fibre dimension counting) that necessarily every $L(t)\subseteq X_t$ for all $t\in\Sigma$, thus they determine $\Sigma$.

The advantage of restricting ourselves to the Fermat point, is that bound \eqref{cotatangent} becomes considerably simpler since the Jacobian ideal $J^F=\langle x_0^{d-1},\ldots,x_{n+1}^{d-1}\rangle$ of the Fermat variety becomes monomial. Even though this is still a non-trivial bound since the Artin Gorenstein ideals $J^{F,\lambda(0)}$ are not monomial for an arbitrary Hodge cycle $\lambda(0)\in H^{\frac{n}{2},\frac{n}{2}}(X_0,\Z)$. Nevertheless we can handle them since they are quotient ideals of $J^F$ (see Proposition \ref{midef}). Once the bound is established at the Fermat point, the same methods of Voisin and Otwinowska allow us to deduce \eqref{dimJFdelta1} for all components of smallest codimension passing through Fermat. The subtle point in our context is to deduce that the linear subvariety $L(0)=V(J^{F,\lambda(0)})$ is in fact contained in the Fermat variety $X_0$. Since we just have \eqref{dimJFdelta1} only at one point (not even in a small neighbourhood of it) we cannot apply Voisin's geometric method. And to prove \eqref{dimJFdelta1} for all parameters of $V_\lambda$ is still a difficult commutative algebra problem (the main tools used by Voisin and Otwinowska, namely the theorems due to Macaulay, Grobmann and Green, are not enough in small degrees, see \cite{Otw02}). Despite the above we are able to show the following result.

\begin{thm}
\label{thm1.1}
Let $F=x_0^d+\cdots+x_{n+1}^d$ and $X=\{F=0\}$ be the Fermat variety of even dimension $n$ and degree $d$ such that $\zeta_d+\zeta_d^{-1}\notin\Q$ (i.e. $d\neq 1,2,3,4,6$). Let $\lambda\in H^{\frac{n}{2},\frac{n}{2}}(X,\Z)$ be a non-trivial Hodge cycle such that there exist $L_1,\ldots,L_{\frac{n}{2}+1}\in J^{F,\lambda}_1$ linearly independent. Then $$
\P^\frac{n}{2}:=\{L_1=\cdots=L_{\frac{n}{2}+1}=0\}\subseteq X.
$$
\end{thm}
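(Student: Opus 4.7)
The plan is to translate the hypothesis $L_i\in J^{F,\lambda}_1$ into an explicit linear system on the coefficients of the polynomial $P_\lambda\in R^F_\sigma$ representing $\lambda_\prim$, to exploit the arithmetic constraints on $P_\lambda$ forced by integrality of $\lambda$, and finally to deduce that $\P^\frac{n}{2}=\{L_1=\cdots=L_{\frac{n}{2}+1}=0\}$ lies on $X$. Since $J^F=\langle x_0^{d-1},\ldots,x_{n+1}^{d-1}\rangle$ is monomial, $R^F_\sigma$ has a basis of \emph{admissible} monomials $x^\beta$ with $|\beta|=\sigma$ and $0\le\beta_j\le d-2$. Writing $P_\lambda=\sum_\beta c_\beta x^\beta$ and $L_i=\sum_j a_{ij}x_j$, the condition $L_iP_\lambda\in J^F$ becomes the linear system
\begin{equation*}
\sum_{j:\gamma_j\ge 1}a_{ij}\,c_{\gamma-e_j}=0
\end{equation*}
ranging over all admissible $\gamma$ with $|\gamma|=\sigma+1$ and all $i=1,\ldots,\frac{n}{2}+1$.

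Next, I would exploit the action of $G=\mu_d^{n+2}/\mathrm{diag}$ on the Fermat, under which every admissible monomial spans an eigenline. Since $\lambda\in H^n(X,\Z)_\prim$ is an integral Hodge cycle of pure type $(\frac{n}{2},\frac{n}{2})$, the support $\{\beta:c_\beta\neq 0\}$ is a union of $\Gal(\Q(\zeta_d)/\Q)$-orbits of Hodge characters in the sense of Shioda: every such $\beta$ satisfies $\sum_j\langle t(\beta_j+1)\rangle/d=\frac{n}{2}+1$ for all $t\in(\Z/d)^\times$. The hypothesis $\zeta_d+\zeta_d^{-1}\notin\Q$, i.e. $[\Q(\zeta_d):\Q]\ge 3$, is essential here: it makes the Galois orbits large enough that this constraint is genuinely restrictive, whereas in the excluded degrees $d\in\{3,4,6\}$ there exist exotic Hodge classes which do not come from linear subvarieties.

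After permuting coordinates (an automorphism of the Fermat) I may assume that the submatrix of $(a_{ij})$ on the columns $j=0,1,\ldots,\frac{n}{2}$ is invertible. Row-reducing among the $L_i$ yields $L_i=x_{i-1}-\ell_i(x_{n/2+1},\ldots,x_{n+1})$, so $\P^\frac{n}{2}$ is the graph of $(\ell_1,\ldots,\ell_{\frac{n}{2}+1})$ and the claim $\P^\frac{n}{2}\subseteq X$ becomes
\begin{equation*}
F\big|_{\P^\frac{n}{2}} \;=\; \sum_{i=1}^{\frac{n}{2}+1}\ell_i^d \;+\; \sum_{j=\frac{n}{2}+1}^{n+1}x_j^d \;\equiv\; 0 \quad\text{in}\quad \C[x_{n/2+1},\ldots,x_{n+1}].
\end{equation*}

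The hard part, and main obstacle, is to deduce this identical vanishing purely from the linear system of the first step combined with the Galois constraints of the second step. Unlike Voisin's geometric argument for $n=2$, all information is pinned at the single Fermat point, so there is no one-parameter family along which to propagate a generic rank bound. The strategy is: for each monomial $x^\delta$ appearing in the expansion of $F|_{\P^\frac{n}{2}}$, to select a test multi-index $\gamma$ such that the resulting linear relation, combined with Galois rigidity and summed appropriately over $i$, forces the coefficient of $x^\delta$ to vanish. The nondegeneracy of the arithmetic sums which arise --- involving $d$-th roots of unity and the coefficients of the $\ell_i$ --- is precisely what the hypothesis $[\Q(\zeta_d):\Q]\ge 3$ is designed to guarantee.
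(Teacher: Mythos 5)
Your proposal stops exactly where the proof has to start, and the constraints you plan to use are provably too weak. The paper's own argument has two pillars. First (Proposition \ref{prop10}), a commutative-algebra step you do not have: since $\langle L_1,\ldots,L_{\frac{n}{2}+1},x_1^{d-1},\ldots,x_{n+1}^{d-1}\rangle$ is already Artinian Gorenstein of socle $\sigma$, it equals $J^{F,\lambda}$; a Gr\"obner-basis division of $(L_i-x_{2i-2})^{d-1}$ then forces each $L_i$ to be a \emph{binomial} $x_{2i-2}-a_{2i-2}x_{2i-1}$ pairing the variables, and $P_\lambda$ to be the explicit product $c_\lambda\prod_j\bigl(x_{2j-2}^{d-1}-(a_{2j-2}x_{2j-1})^{d-1}\bigr)/\bigl(x_{2j-2}-a_{2j-2}x_{2j-1}\bigr)$. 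Your reduction only row-reduces the $L_i$ to graphs $x_{i-1}-\ell_i$ with general linear $\ell_i$, so you never reach this normal form, and the final identity $\sum_i\ell_i^d+\sum_j x_j^d\equiv 0$ stays an intractable target. Second, the rationality of $\lambda$ enters through the rationality of its intersection numbers with \emph{all linear cycles} of the Fermat (computed via \cite{VillaPCIAC}), which yields ratios $\frac{(a_i^{d-1}+\zeta_{2d}^r)(a_i\zeta_{2d}^s-1)}{(a_i^{d-1}+\zeta_{2d}^s)(a_i\zeta_{2d}^r-1)}\in\Q$, and a cross-ratio computation (Proposition \ref{prop11}) using $\zeta_d+\zeta_d^{-1}\notin\Q$ forces $a_i^d+1=0$, i.e.\ $\P^{\frac{n}{2}}\subseteq X$.

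The gap in your plan is not just that the ``hard part'' is left as a strategy: the input you propose to feed it cannot suffice. Your arithmetic constraint is only that the support of $P_\lambda$ consists of Shioda Hodge characters (Galois-stable Hodge type). But for every choice of parameters $a_i\in\C^\times$, the monomials of the product $P_a$ of \eqref{pol} have exponents pairing as $\beta_{2i-2}+\beta_{2i-1}=d-2$, so the corresponding characters satisfy $a_{2i-2}+a_{2i-1}\equiv 0\ (\mathrm{mod}\ d)$ and are Hodge for \emph{all} Galois twists $t$, regardless of the values $a_i$. Hence the classes $\lambda_a$ with random $a_i$ (see \S\ref{sec5.5} and \S\ref{sec6}) satisfy both your linear system and your Hodge-character support condition, yet $V(J^{F,\lambda_a}_1)\not\subseteq X$ unless $a_i^d=-1$. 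So no choice of ``test multi-indices $\gamma$'' combined with support-level Galois rigidity can force the vanishing you need; one must use rationality in a finer way --- either Galois-equivariance of the actual eigen-coefficients, or, as the paper does, rationality of periods over the linear cycles --- and that is precisely the step your proposal does not supply.
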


Theorem \ref{thm1.1} together with Proposition \ref{prop6} and Proposition \ref{prop7} imply Theorem \ref{thm1}. In order to prove Theorem \ref{thm1.1} we exploit the fact that $\lambda$ is a rational class. This fact was never used in Voisin, Green, Otwinowska and Movasati's arguments. The main tools that allows us to use this information are \cite[Theorem 1.1, Proposition 6.1, Corollary 8.1]{VillaPCIAC} which allow us to compute the periods of $\lambda$ over algebraic cycles, in terms of the coefficients of the linear polynomials $L_1,\ldots,L_{\frac{n}{2}+1}$ defining the linear subspace $\P^\frac{n}{2}$. Computing these periods over all linear cycles of the Fermat variety we get enough constrains on the coefficients of $L_1,\ldots,L_{\frac{n}{2}+1}$ to conclude that $\P^\frac{n}{2}\subseteq X_0$ (see Proposition \ref{prop11}). The particular cases $d=3,4,6$ are more delicate since in these cases it is not enough to compute the periods of $\lambda$ over all linear cycles to conclude Theorem \ref{thm1.1} (even for $d=3,4$ where linear cycles generate all Hodge cycles \cite{Ran1980,Shioda1979,AMV17}). In fact, in \S\ref{sec5.5} we show infinite families of cohomology classes $\lambda\in F^\frac{n}{2}H^n_\dR(X)_\prim$ with all their periods over linear cycles having rational values and
$$
\dim_\C \ J^{F,\lambda}_1=\frac{n}{2}+1 \ \ \ \text{ but } \ \ \ V(J^{F,\lambda}_1)\not\subseteq X_0.
$$
We expect that these cohomology classes are not rational hence not Hodge cycles. We want to point out that Voisin's argument never uses the rationality of $\lambda$, and so it applies in general for any element of $H^{\frac{n}{2},\frac{n}{2}}(X_0)_\prim$. We discuss more about the relation between the equality \eqref{dimJFdelta1} and the rationality of $\lambda_\prim$ in \S \ref{sec6}.

Following the same spirit as Voisin's work \cite{voisin89}, we go further and determine the lower bound of the codimension of the components of the Hodge locus not satisfying \eqref{igualdad1} as follows.

\begin{thm}
\label{thm2}
Let $n$ be an even number, $d$ be a number such that $\zeta_d+\zeta_d^{-1}\notin\Q$ (i.e. $d\neq 1,2,3,4,6$), $0\in \HL_{n,d}$ be the Fermat variety and $0\in\Sigma$ be a component of the Hodge locus different from \eqref{complin}. Then
\begin{equation}
\codim_{T} \ \Sigma\ge{\frac{n}{2}+d\choose d}+{\frac{n}{2}+d-1\choose d-1}-(\frac{3n^2}{8}+\frac{9n}{4}+2).  
\end{equation}
In fact for $d\ge \text{max}\{4,2+\frac{6}{n}\}$, if $0\in\Delta\subseteq T$ is a polydisc and $\lambda\in \Gamma(\Delta,R^n\pi_*\Z)$ is such that $0\in V_\lambda$ but \eqref{igualdad1} does not hold, then 
\begin{equation}
\label{desteo2}
\codim_{T_0T} \ T_0V_\lambda\ge{\frac{n}{2}+d\choose d}+{\frac{n}{2}+d-1\choose d-1}-(\frac{3n^2}{8}+\frac{9n}{4}+2).  
\end{equation}
Furthermore, if the equality in \eqref{desteo2} is attained and $n\ge 4$, then there exists a complete intersection $Z\subseteq\P^{n+1}$ of type $(1,1,\ldots,1,2)$ such that $I(Z)\subseteq J^{F,\lambda}$. If in addition $d\ge 5$ and
\begin{equation}
\label{cicond}
F\in (J^{F,\lambda})^2,    
\end{equation}
then $Z\subseteq X_0$ and $\lambda(0)_\prim=a[Z]_\prim$ for some $a\in\Q^\times$. 
\end{thm}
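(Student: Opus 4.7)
The proof hinges on the identification $\codim_{T_0T}T_0V_\lambda = \dim_\C R^{F,\lambda}_d$ recalled in \S\ref{sec2}, reducing the task to lower-bounding $\dim_\C R^{F,\lambda}_d$ when \eqref{igualdad1} fails. A first reduction eliminates the linear-cycle alternative: if $\dim_\C J^{F,\lambda}_1\geq\frac{n}{2}+1$, Theorem \ref{thm1.1} produces a linear $\P^{n/2}\subseteq X_0$, and the argument of Theorem \ref{thm1} (via Proposition \ref{prop7}) identifies $\lambda(0)_{\prim}$ with a rational multiple of $[\P^{n/2}]_{\prim}$, so \eqref{igualdad1} holds, contradicting the hypothesis. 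Hence $\dim_\C R^{F,\lambda}_1\geq\frac{n}{2}+2$.

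The core technical step is to prove, under this bound and the assumption $d\geq\max\{4,2+\frac{6}{n}\}$, the inequality
$$
\dim_\C R^{F,\lambda}_d\geq \binom{\frac{n}{2}+d}{d}+\binom{\frac{n}{2}+d-1}{d-1}-\left(\frac{3n^2}{8}+\frac{9n}{4}+2\right).
$$
My plan is to use Gorenstein duality $\dim_\C R^{F,\lambda}_d=\dim_\C R^{F,\lambda}_{\sigma-d}$ together with Macaulay's growth theorem and Gotzmann's persistence to trace the Hilbert function of $R^{F,\lambda}$ from the bottom upward. The key leverage is that $J^{F,\lambda}$ is a quotient ideal of the monomial Fermat Jacobian $J^F=(x_0^{d-1},\ldots,x_{n+1}^{d-1})$ (Proposition \ref{midef}), which constrains the possible shapes $J^{F,\lambda}$ can take; the lower bound is extremal, attained by the Artin Gorenstein algebra attached to a Fermat-contained complete intersection of type $(1^{n/2},2)$, and the constant $\frac{3n^2}{8}+\frac{9n}{4}+2$ is a direct computation in that model.

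When equality in \eqref{desteo2} holds with $n\geq 4$, the extremal Hilbert function is rigid: Gotzmann persistence forces $J^{F,\lambda}$ to contain $\frac{n}{2}$ linearly independent linear forms $L_1,\ldots,L_{n/2}$ together with a quadric $Q$ nonzero modulo $(L_1,\ldots,L_{n/2})$, giving a complete intersection $Z=V(L_1,\ldots,L_{n/2},Q)$ with $I(Z)\subseteq J^{F,\lambda}$. Assuming further $d\geq 5$ and $F\in(J^{F,\lambda})^2$, extremality also forces $J^{F,\lambda}$ to agree with $I(Z)$ in all relevant low degrees, so any decomposition $F=\sum A_iB_i$ with $A_i,B_i\in J^{F,\lambda}$ of positive degree can be rearranged to have one factor in $I(Z)$, yielding $F(p)=0$ at every $p\in Z$ and hence $Z\subseteq X_0$. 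Finally, the period formulas for complete intersection cycles from \cite{VillaPCIAC}, applied along the lines of Theorem \ref{thm1.1} by matching periods of $\lambda(0)_{\prim}$ and $[Z]_{\prim}$ over all linear cycles of the Fermat variety, identify the two primitive classes up to a rational scalar.

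The main obstacle is the Hilbert function bound: ruling out all Hilbert function shapes lying strictly below the target requires a refined Macaulay-Gotzmann analysis sensitive to the Fermat monomial structure, substantially more delicate than the single-step extremality underlying Theorem \ref{thm1}. The hypothesis $d\geq\max\{4,2+\frac{6}{n}\}$ enters precisely to guarantee that Green's hyperplane restriction theorem and its iterates give strict-enough growth estimates to produce the constant $\frac{3n^2}{8}+\frac{9n}{4}+2$; for smaller degrees a case-specific analysis of the possible quadric generators in $J^{F,\lambda}$ would be required.
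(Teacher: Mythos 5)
Your opening reduction (failure of \eqref{igualdad1} forces $\dim_\C J^{F,\lambda}_1\le\frac{n}{2}$ via Theorem \ref{thm1.1} and Proposition \ref{prop7}) is sound, but the heart of the theorem --- the lower bound $\dim_\C R^{F,\lambda}_d\ge\binom{\frac{n}{2}+d}{d}+\binom{\frac{n}{2}+d-1}{d-1}-(\frac{3n^2}{8}+\frac{9n}{4}+2)$ and the rigidity of the equality case --- is not actually proved in your proposal: you name Macaulay growth, Gotzmann persistence and Green's hyperplane restriction as the intended tools and yourself flag the required ``refined Macaulay--Gotzmann analysis'' as the main obstacle. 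That analysis is exactly what is missing, and it is not how the paper proceeds (the paper explicitly notes that Macaulay/Gotzmann/Green-type results are insufficient in small degrees). The actual proof stays at the Fermat point and is elementary and combinatorial: by Proposition \ref{hilb} one passes to $\langle\LT(J^{F,\lambda})\rangle$, picks a socle-degree monomial $x^\alpha\notin\langle\LT(J^{F,\lambda})\rangle_\sigma$ (necessarily dividing $(x_0\cdots x_{n+1})^{d-2}$ because $J^F$ is monomial), and bounds $\dim_\C R^{F,\lambda}_d\ge\#S^d_\alpha$; if $x^\alpha$ is not of the extremal form $x_{i_1}^{d-2}\cdots x_{i_{\frac{n}{2}+1}}^{d-2}$, Proposition \ref{prophos} already gives the constant, and if it is, the failure of \eqref{igualdad1} produces a second monomial $x^{\alpha'}$ of degree $\sigma-1$ outside the leading-term ideal, and the bound follows from counting $S^d_\alpha\cup S^d_{\alpha'}$ in two subcases. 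The equality case is then handled by computing $\langle\LT(J^{F,\lambda})\rangle_k$ explicitly for $k\ge d$ and transferring to $k\le\sigma-d$ by Gorenstein duality (this is where $n,d\ge4$, i.e. $d-2\le\sigma-d$, enters, and where the subcase of lowest constant is excluded because its bound is strict for $n\ge4$), yielding $J^{F,\lambda}_k=\langle L_0,L_2,\dots,L_{n-2},C_n,D_{n+1}\rangle_k$ for all $k\le d-2$ with an extra generator $D_{n+1}$ of degree $d-2$; so $J^{F,\lambda}$ does \emph{not} agree with $I(Z)$ in all low degrees, only in degrees $\le d-3$, which is precisely what makes the step using $F\in(J^{F,\lambda})^2$ and $d\ge5$ work.

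Your final identification step is also not justified as stated: matching periods of $\lambda(0)_\prim$ and $[Z]_\prim$ over all linear cycles cannot identify the two classes, since periods over linear cycles do not determine a Hodge class in general (this is exactly the phenomenon exhibited in \S\ref{sec5.5}, and there is no reason the primitive class of a quadric complete intersection lies in the span of linear cycle classes). The paper instead concludes ideal-theoretically: writing $F=L_0P_0+\cdots+L_{n-2}P_{n-2}+C_nQ_n$ with $P_i,Q_n\in J^{F,\lambda}$ gives $Z\subseteq X_0$ and $J^{F,[Z]}=\langle L_0,\dots,L_{n-2},C_n,P_0,\dots,P_{n-2},Q_n\rangle\subseteq J^{F,\lambda}$; both ideals are Artinian Gorenstein with the same socle, hence equal, and Corollary \ref{igualjotas} yields $\lambda(0)_\prim=a[Z]_\prim$ with $a\in\Q^\times$. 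In summary, the skeleton of your plan parallels the statement, but the two decisive ingredients --- the monomial counting that produces the constant $\frac{3n^2}{8}+\frac{9n}{4}+2$ together with its equality analysis, and the Gorenstein socle argument identifying the class --- are either missing or replaced by arguments that would not go through.
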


It follows from Dan's result \cite{Dan14} that condition \eqref{cicond} is satisfied for every complete intersection algebraic cycle (see Remark \ref{condDan}). We do not know if this condition characterizes complete intersection algebraic cycles. 

The article is organized as follows: In \S \ref{sec2} we explain the algebraic translation of the Zariski tangent space of the Hodge loci $V_\lambda$ to the degree $d$ part of the Artin Gorenstein ideal $J^{F,\lambda}$. We also explain the main properties of this ideal, and its connection with the original definition in terms of periods due to Voisin \cite{voisin89} and Otwinowska \cite{Otwinowska2003}. Furthermore we give an elementary proof of Dan's result (Example \ref{dan}) which will be needed in several parts later in the article. In \S \ref{sec3} we establish the basic combinatorial bound of the Hilbert function of monomial ideals. We also explain how using a monomial ordering we can translate this bound into a bound of the Hilbert function of $J^{F,\lambda}$. We put all these facts together in \S \ref{sec4.1} to get Movasati's bound \eqref{cotatangent} and to characterize the equality in terms of $J^{F,\lambda}$, thus reducing the proof of Theorem \ref{thm1} to Theorem \ref{thm1.1}. The main part of the article is \S \ref{sec4.2} where we prove Theorem \ref{thm1.1}. Our proof relies heavily on explicit computations of the periods of $\lambda$ over all linear cycles inside the Fermat variety. Using the machinery developed in \cite{VillaPCIAC}, we can compute these periods once we find a closed formula for the polynomial $P_\lambda$ (this is done in Proposition \ref{prop10}). In \S \ref{sec5.5} we explain why our method do not cover the cases $d=3,4,6$. In \S \ref{sec5} we prove Theorem \ref{thm2} by describing the elements of $J^{F,\lambda}$ of degree one and two. Finally in \S \ref{sec6} we make some comments about how to deal with the cases $d=3,4,6$, and some further comments on the relation of the rationality of the class $\lambda$ and the continuity of the Hilbert function of $J^{F,\lambda}$ along the Hodge locus.

%Note that in the case of cubic hypersurfaces, for every complete intersection $Z\subseteq X$ of type $(1,1,\ldots,1,2)$ there exists a linear subvariety $\P^\frac{n}{2}\subseteq X$ such that $[Z]_\prim=-[\P^\frac{n}{2}]_\prim$. The gap given in \eqref{desteo2} for Fermat cubics has been verified experimentally by Movasati \cite[Table 5]{hosseincubic} and is attained by algebraic cycles supported in generalized cubic scrolls.

\bigskip

\noindent\textbf{Acknowledgements.} I am grateful to Prof. Remke Kloosterman for his comments and suggestions. Special thanks go to Prof. Hossein Movasati for his careful reading, suggestions and several useful conversations. I am also very grateful to the anonymous referee for his useful suggestions and corrections.

\section{Artinian Gorenstein algebra associated to a Hodge cycle}
\label{sec2}
The reduction of Theorem \ref{thm1} to Theorem \ref{thm1.1}, and the proof of Theorem \ref{thm2} are based on a translation to a purely algebraic problem. This is possible after giving the algebraic description of the Zariski tangent space of the local Hodge loci. In this section we will explain this description and its relation to the definition of the ideal $J^{F,\lambda}$ introduced in \eqref{defjflambda}. In spite definition \eqref{defjflambda} is the good one to work algebraically, this was not the original definition
due to Voisin \cite{voisin89} and Otwinowska \cite{Otwinowska2003} which was given in terms of periods. Let us recall this definition and connect it with \eqref{defjflambda}.

\begin{dfn}
Let $X=\{F=0\}\subseteq\P^{n+1}$ be a smooth degree $d$ hypersurface of even dimension $n$. Let $\lambda\in H^{\frac{n}{2},\frac{n}{2}}(X,\Z)$ be a non-trivial Hodge cycle. We define the homogeneous ideal
\begin{equation}
\label{hosdef}    
J^{F,\lambda}_i:=\left\{P\in \C[x_0,\ldots,x_{n+1}]_i: \int_X \res\left(\frac{PQ\Omega}{F^{\frac{n}{2}+1}}\right)\wedge\lambda=0, \forall Q\in \C[x_0,\ldots,x_{n+1}]_{\sigma-i}\right\}
\end{equation}
for every $i\ge 0$ and $\sigma:=(d-2)(\frac{n}{2}+1)$.
\end{dfn}

\begin{prop}
\label{tan}
Let $T\subseteq\C[x_0,\ldots,x_{n+1}]_d$ be the parameter space of smooth degree $d$ hypersurfaces of $\P^{n+1}$, of even dimension $n$. For $t\in T$, let $X_t=\{F=0\}\subseteq \P^{n+1}$ be the corresponding hypersurface. For every Hodge cycle $\lambda\in H^{\frac{n}{2},\frac{n}{2}}(X_t,\Z)$, we can compute the Zariski tangent space of its associated Hodge locus $V_\lambda$ as
$$
T_tV_\lambda=\left\{P\in \C[x_0,\ldots,x_{n+1}]_d: \int_{X}\res\left(\frac{PQ\Omega}{F^{\frac{n}{2}+1}}\right)\wedge\lambda=0, \forall Q\in \C[x_0,\ldots,x_{n+1}]_{d\frac{n}{2}-n-2}\right\}=J^{F,\lambda}_d.
$$
Where $\Omega=\sum_{i=0}^{n+1}(-1)^ix_idx_0\wedge\cdots\widehat{dx_i}\cdots\wedge dx_{n+1}$, and $\res: H^{n+1}_\dR(\P^{n+1}\setminus X_t)\rightarrow H^n_\dR(X_t)$ is the \textit{residue map}. Note that we have identified $T_tT\simeq \C[x_0,\ldots,x_{n+1}]_d$.
\end{prop}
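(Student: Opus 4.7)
The plan is to prove the two equalities in the statement separately: the first (tangent space equals the period-defined set) by the infinitesimal variation of Hodge structures, and the second (period-defined set equals $J^{F,\lambda}_d$, understood as $(J^F:P_\lambda)_d$ via \eqref{defjflambda}) by Macaulay duality on the Gorenstein ring $R^F$.

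For the first equality, I would fix a flat (Gauss-Manin) extension of $\lambda$ in an analytic neighborhood of $t$, and note that a tangent vector $P\in T_tT\cong\C[x_0,\ldots,x_{n+1}]_d$ lies in $T_tV_\lambda$ exactly when the image of this flat section in the quotient bundle $R^n\pi_*\C/F^{n/2}$ vanishes to first order in the direction $P$. Griffiths transversality places this first-order variation in $F^{n/2-1}/F^{n/2}\cong H^{n/2-1,n/2+1}(X_t)$, and its vanishing is equivalent to trivial cup-product pairing against every class of $H^{n/2+1,n/2-1}(X_t)\cong F^{n/2+1}/F^{n/2+2}$, which by Griffiths' residue isomorphism is represented by $\res(Q\Omega/F^{n/2})$ with $Q\in \C[x_0,\ldots,x_{n+1}]_{dn/2-n-2}$. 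The Carlson-Griffiths formula
$$
\nabla_P \res\!\left(\frac{P_\lambda\Omega}{F^{n/2+1}}\right) \equiv -(n/2+1)\,\res\!\left(\frac{P\,P_\lambda\,\Omega}{F^{n/2+2}}\right) \pmod{F^{n/2}},
$$
together with the standard integration-by-parts identities for Griffiths residues, shows that this cup-product pairing equals (up to a non-zero universal constant) the period $\int_{X_t}\res(PQ\Omega/F^{n/2+1})\wedge\lambda$. Since only vanishing is at stake, this identifies $T_tV_\lambda$ with the period-defined set.

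For the second equality, recall that the smoothness of $X_t$ makes $R^F$ an Artin Gorenstein ring of socle degree $2\sigma=(d-2)(n+2)$, and the multiplication pairing $R^F_{\sigma+d}\times R^F_{\sigma-d}\to R^F_{2\sigma}\cong\C$ is perfect. Griffiths' residue formula for the cup product of two top Hodge classes, applied to $\lambda_\prim=\res(P_\lambda\Omega/F^{n/2+1})$, yields
$$
\int_{X_t}\res\!\left(\frac{PQ\Omega}{F^{n/2+1}}\right)\wedge\lambda \;=\; c\cdot\overline{P\,P_\lambda\,Q}\;\in\;R^F_{2\sigma}\cong\C
$$
for some non-zero constant $c$. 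Vanishing for all $Q\in \C[x_0,\ldots,x_{n+1}]_{\sigma-d}$ is therefore equivalent to $P\cdot P_\lambda\cdot Q=0$ in $R^F_{2\sigma}$ for every such $Q$, and by the perfect pairing this is equivalent to $P\cdot P_\lambda=0$ in $R^F_{\sigma+d}$, i.e.\ $P\in (J^F:P_\lambda)_d=J^{F,\lambda}_d$.

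The main obstacle I anticipate is the careful book-keeping of the non-zero constants and signs appearing in the Carlson-Griffiths derivative and in the Jacobian-ring cup-product formula, plus the integration-by-parts step needed to match the denominator $F^{n/2+2}$ coming from $\nabla_P$ with the denominator $F^{n/2+1}$ appearing in the stated period. However, since for the identification of $T_tV_\lambda$ only the kernel of the relevant pairings is at stake and not their exact value, the argument ultimately reduces to invoking the standard residue calculus of Griffiths and the classical Macaulay duality for the Gorenstein algebra $R^F$.
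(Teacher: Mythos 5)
Your proposal is correct and follows essentially the same route as the paper, which simply cites the IVHS description of $T_tV_\lambda$ (\cite[Lemma 5.16]{vo03}) together with the identification of the IVHS with polynomial multiplication in the Jacobian ring (\cite[Theorem 6.17]{vo03}, i.e.\ the Carlson--Griffiths calculus you invoke); you merely unpack these citations. Note only that with the paper's definition \eqref{hosdef} the second equality ``$=J^{F,\lambda}_d$'' is definitional, so your Macaulay-duality step is really a (correct) anticipation of Proposition \ref{midef} rather than part of this proof.
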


\begin{proof}
This is a well-known result that follows from the description of the Zariski tangent space of the Hodge locus associated to a Hodge cycle by means of the infinitesimal variations of Hodge structures \cite[Lemma 5.16]{vo03} and the fact that the infinitesimal variations of Hodge structures corresponds to polynomial multiplication in the case of hypersurfaces \cite[Theorem 6.17]{vo03}.
\end{proof}

Before proving that
$R^{F,\lambda}:=\C[x_0,\ldots,x_{n+1}]/J^{F,\lambda}$ is an Artinian Gorenstein algebra, let us recall some basic facts about these algebras.

\begin{dfn}
A graded $\C$-algebra $R$ is \textit{Artinian Gorenstein} if there exist $\sigma\in \N$ such that 
\begin{itemize}
    \item[(i)] $R_e=0\text{ for all }e>\sigma$,
    \item[(ii)] $\dim_\C \ R_\sigma=1$,
    \item[(iii)] $\text{the multiplication map }R_i\times R_{\sigma-i}\rightarrow R_\sigma\text{ is a perfect pairing for all }i=0,\ldots,\sigma.$
\end{itemize}
The number $\sigma=:\soc(R)$ is the \textit{socle of $R$}. We say that an ideal $I\subseteq\C[x_0,\ldots,x_{n+1}]$ is \textit{Artinian Gorenstein of socle} $\sigma=:\soc(I)$ if the quotient ring $R=\C[x_0,\ldots,x_{n+1}]/I$ is Artinian Gorenstein of socle $\sigma$.
\end{dfn}

\begin{rmk}
An elementary property of Artinian Gorenstein ideals $I,J\subseteq\C[x_0,\ldots,x_{n+1}]$ is that 
$$
I=J \ \ \ \ \ \  \text{ if and only if } \ \ \ \ \ \ \soc(I)=\sigma=\soc(J)\text{ and }I_\sigma=J_\sigma.
$$
In particular if $I\subseteq J$ and $\soc(I)=\soc(J)$ then $I=J$. This property will be used several times along the present article without explicit mention.
\end{rmk}

\begin{prop}
\label{midef}
Let $X=\{F=0\}\subseteq\P^{n+1}$ be a smooth degree $d$ hypersurface of even dimension $n$, and $\lambda\in H^{\frac{n}{2},\frac{n}{2}}(X,\Z)$ be a non-trivial Hodge cycle. Then $R^{F,\lambda}$ is an Artinian Gorenstein algebra over $\C$ of socle $\sigma:=(d-2)(\frac{n}{2}+1)$. Furthermore if $J^F:=\langle \frac{\partial F}{\partial x_0},\ldots,\frac{\partial F}{\partial x_{n+1}}\rangle$ is the Jacobian ideal, then
\begin{equation}
\label{ideal}    
J^{F,\lambda}=(J^F:P_\lambda),
\end{equation}
for $P_\lambda\in \C[x_0,\ldots,x_{n+1}]_\sigma$ such that $\lambda_\prim=\res\left(\frac{P_\lambda\Omega}{F^{\frac{n}{2}+1}}\right)^{\frac{n}{2},\frac{n}{2}}$.
\end{prop}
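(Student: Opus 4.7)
The proof naturally splits into two tasks: first, identifying $J^{F,\lambda}$ with the colon ideal $(J^F:P_\lambda)$, and second, bootstrapping the Artinian Gorenstein structure of $R^{F,\lambda}$ from that of the Jacobian ring $R^F$. Recall that $R^F$ is Artinian Gorenstein of socle degree $N:=(n+2)(d-2)=2\sigma$ (classical Macaulay, or equivalently Griffiths' residue description of $H^n_\dR(X)_\prim$ together with Poincar\'e duality). The symmetry $N=2\sigma$ is precisely what makes multiplication by $P_\lambda\in R^F_\sigma$ produce a new Artinian Gorenstein algebra with socle in degree $\sigma$.

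\textbf{Step 1: the identity \eqref{ideal}.} I would translate the integral in \eqref{hosdef} into a product in $R^F$. Fix $P\in\C[x_0,\ldots,x_{n+1}]_i$, $Q\in\C[x_0,\ldots,x_{n+1}]_{\sigma-i}$, and set $\alpha:=\res(PQ\Omega/F^{n/2+1})$. Since $\alpha$ is primitive (Griffiths' residue lands in $H^n_\dR(X)_\prim$), the Lefschetz decomposition's orthogonality under the Poincar\'e pairing gives $\int_X\alpha\wedge\lambda=\int_X\alpha\wedge\lambda_\prim$; and since $\lambda_\prim$ is of pure Hodge type $(n/2,n/2)$ while different Hodge pieces are cup-product orthogonal, only the $(n/2,n/2)$-component of $\alpha$, represented by $PQ\in R^F_\sigma$, contributes. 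The Carlson--Griffiths formula \cite{CarlsonGriffiths1980} then identifies
$$
\int_X\alpha\wedge\lambda \;=\; c\cdot PQ\cdot P_\lambda \quad\text{inside } R^F_{2\sigma}\simeq\C,
$$
for a non-zero constant $c$ depending only on $n,d$. Hence $P\in J^{F,\lambda}_i$ if and only if $PQ\cdot P_\lambda=0$ in $R^F_{2\sigma}$ for every $Q\in\C[x]_{\sigma-i}$, and the perfectness of the Macaulay pairing $R^F_{\sigma+i}\times R^F_{\sigma-i}\to R^F_{2\sigma}$ upgrades this to $P\cdot P_\lambda=0$ in $R^F_{\sigma+i}$, i.e.\ $P\in(J^F:P_\lambda)_i$.

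\textbf{Step 2: Artinian Gorenstein property.} Granting \eqref{ideal}, I would verify the three axioms in one stroke by identifying $R^{F,\lambda}$ with the image of the multiplication map $R^F\xrightarrow{\cdot P_\lambda} R^F$ (shifted by $\sigma$), using that $P_\lambda\ne 0$ in $R^F_\sigma$ (which follows from $\lambda_\prim\ne 0$) and the Macaulay pairing on $R^F$. For $i>\sigma$, the codomain $R^F_{\sigma+i}$ vanishes, hence $R^{F,\lambda}_i=0$. For $i=\sigma$, the linear functional $R^F_\sigma\xrightarrow{\cdot P_\lambda}R^F_{2\sigma}\simeq\C$ is non-zero by Macaulay perfectness applied to $P_\lambda$, so $\dim_\C R^{F,\lambda}_\sigma=1$. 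For general $0\le i\le\sigma$, if $\bar P\in R^{F,\lambda}_i$ pairs to zero with all of $R^{F,\lambda}_{\sigma-i}$, then $P\cdot P_\lambda\in R^F_{\sigma+i}$ pairs to zero with all of $R^F_{\sigma-i}$ under Macaulay, forcing $P\cdot P_\lambda=0$ and $\bar P=0$; non-degeneracy on one side plus dimension symmetry then deliver perfectness.

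\textbf{Main obstacle.} All the substance is packaged into the two classical inputs: the Carlson--Griffiths algebraic translation of the cup product on $H^n_\dR(X)_\prim$, and Macaulay duality on $R^F$. Consequently there is no serious obstacle; the only care required is in Step 1, where one must carefully disentangle the primitive part of $\lambda$ from its non-primitive component (Lefschetz orthogonality) and then isolate the $(n/2,n/2)$-slice of $\alpha$ (Hodge orthogonality) before invoking Carlson--Griffiths.
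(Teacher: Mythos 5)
Your proof is correct and takes essentially the same route as the paper: the identity $J^{F,\lambda}=(J^F:P_\lambda)$ is obtained from the Carlson--Griffiths cup-product formula together with Macaulay duality on $R^F$, and the Artinian Gorenstein property of $R^{F,\lambda}$ of socle $\sigma$ is then deduced from that of $R^F$ (a verification the paper dismisses as elementary and you spell out, along with the orthogonality reductions from $\lambda$ to $\lambda_\prim$ and to the $(\frac{n}{2},\frac{n}{2})$ component, which the paper uses implicitly).
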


\begin{proof}
It follows from Macaulay's theorem \cite[Theorem 6.19]{vo03} that the \textit{Jacobian ring} $R^F:=\C[x_0,\ldots,x_{n+1}]/J^F$ is an Artinian Gorenstein $\C$-algebra of socle $2\sigma$. Using this fact, it is elementary to show that if $P_\lambda\notin J^F$ then $(J^F:P_\lambda)$ is Artinian Gorenstein of socle $\sigma$. Thus, the proposition is reduced to prove \eqref{ideal}. Take $P\in \C[x_0,\ldots,x_{n+1}]_i$ and $Q\in \C[x_0,\ldots,x_{n+1}]_{\sigma-i}$ for $i\in\{0,1,\ldots,\sigma\}$. It follows from Griffiths basis theorem \cite[Theorem 6.5]{vo03} that $
\lambda_\prim=\res\left(\frac{P_\lambda\Omega}{F^{\frac{n}{2}+1}}\right)^{\frac{n}{2},\frac{n}{2}},
$
for some $P_\lambda\in\C[x_0,\ldots,x_{n+1}]_\sigma$. And it follows from a result due to Carlson and Griffiths \cite[Theorem 2]{CarlsonGriffiths1980} (see also Proposition \ref{prop8} for a more precise result) that 
\begin{equation}
\label{equivalence1}
\int_X \res\left(\frac{PQ\Omega}{F^{\frac{n}{2}+1}}\right)\wedge \res\left(\frac{P_\lambda\Omega}{F^{\frac{n}{2}+1}}\right)=0 \ \ \ \Longleftrightarrow  \ \ \ PQP_\lambda\in J^F.
\end{equation}
Therefore $J^{F,\lambda}_i=\{P\in \C[x_0,\ldots,x_{n+1}]_i: QPP_\lambda=0\in R^F_{2\sigma}, \forall Q\in R^F_{\sigma-i}\}$. By the perfect pairing condition on the Jacobian algebra it follows that $J^{F,\lambda}=(J^F:P_\lambda)$. Finally $P_\lambda\notin J^F$, since otherwise $\lambda_\prim=0\in H^{\frac{n}{2},\frac{n}{2}}(X)_\prim$ by \eqref{equivalence1}, contradicting our hypothesis.
\end{proof}

\begin{rmk}
The above proposition tells us how to relate the Artin Gorenstein ideal $J^{F,\lambda}$ associated to any Hodge cycle, to the Jacobian ideal $J^F$ of the corresponding hypersurface. Furthermore it tells us how to compute it once we know the polynomial $P_\lambda$ which determines the cohomological class of $\lambda_\prim$ in terms of Griffiths basis theorem. After \cite[Theorem 1.1]{VillaPCIAC} we can compute this polynomial explicitly for combinations of complete intersection algebraic cycles $\lambda=n_1[Z_1]+\cdots+n_k[Z_k]$ in terms of the defining equations of the complete intersection subvarieties $Z_1,\ldots,Z_k\subseteq X$. Essentially knowing $P_\lambda$ implies knowing the cohomological class $\lambda_\prim=\res\left(\frac{P_\lambda\Omega}{F^{\frac{n}{2}+1}}\right)^{\frac{n}{2},\frac{n}{2}}$, the Artin Gorenstein ideal $J^{F,\lambda}=(J^F:P_\lambda)$ and all the periods of $\lambda_\prim$ over $F^\frac{n}{2}H^n_\dR(X)$ \cite[Proposition 6.1]{VillaPCIAC}. The following corollary tells us that the Artin Gorenstein ideal $J^{F,\lambda}$ also determines uniquely $\lambda_\prim\in H^{\frac{n}{2},\frac{n}{2}}(X,\Q)_\prim$ modulo the action of $\Q^\times$.
\end{rmk}

\begin{cor}
\label{igualjotas}
Let $X=\{F=0\}\subseteq\P^{n+1}$ be a smooth degree $d$ hypersurface of even dimension $n$, and consider two Hodge cycles $\lambda_1,\lambda_2\in H^{\frac{n}{2},\frac{n}{2}}(X,\Z)$. Then
$$
J^{F,\lambda_1}=J^{F,\lambda_2} \ \ \Longleftrightarrow \ \ \exists c\in\Q^\times: (\lambda_1-c\cdot\lambda_2)_\prim=0.
$$
\end{cor}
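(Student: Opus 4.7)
The plan is to reduce the statement to an equality of lines in the graded piece $R^F_\sigma$ of the Jacobian ring via the description $J^{F,\lambda}=(J^F:P_\lambda)$ provided by Proposition \ref{midef}, and then to use the rationality of the Hodge cycles to upgrade the resulting complex scalar to a rational one. The reverse implication is essentially immediate: if $(\lambda_1-c\lambda_2)_\prim=0$ for some $c\in\Q^\times$, Griffiths' residue isomorphism $R^F_\sigma\xrightarrow{\sim}H^{\frac{n}{2},\frac{n}{2}}(X)_\prim$ forces $P_{\lambda_1}\equiv c\,P_{\lambda_2}\pmod{J^F}$, and since the colon ideal $(J^F:P)$ depends only on the class $[P]\in R^F$ and is invariant under multiplication by a nonzero constant, Proposition \ref{midef} gives $J^{F,\lambda_1}=J^{F,\lambda_2}$.

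For the forward implication I would invoke Macaulay's theorem: the Jacobian ring $R^F$ is Artinian Gorenstein of socle $2\sigma$, so there is a perfect pairing $R^F_\sigma\times R^F_\sigma\to R^F_{2\sigma}\simeq\C$. The definition of $J^{F,\lambda}$ together with Proposition \ref{midef} identifies
\[
J^{F,\lambda_i}_\sigma=\{Q\in R^F_\sigma : Q\cdot[P_{\lambda_i}]=0\in R^F_{2\sigma}\}
\]
with the hyperplane orthogonal to the line $\C\cdot[P_{\lambda_i}]$ under this pairing; this line is nonzero because the assumption that each $\lambda_i$ is a non-trivial Hodge cycle gives $[P_{\lambda_i}]\neq 0$ in $R^F$. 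In a perfect pairing the orthogonal of a line is a hyperplane and vice versa, so the hypothesis $J^{F,\lambda_1}=J^{F,\lambda_2}$ (restricted to degree $\sigma$) forces $[P_{\lambda_1}]=c\cdot[P_{\lambda_2}]$ in $R^F_\sigma$ for some $c\in\C^\times$. Inverting the residue isomorphism yields $\lambda_{1,\prim}=c\cdot\lambda_{2,\prim}$ in $H^{\frac{n}{2},\frac{n}{2}}(X,\C)_\prim$.

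The only delicate step -- and the only place where the integrality of $\lambda_1,\lambda_2$ enters -- is upgrading $c\in\C^\times$ to $c\in\Q^\times$. I would argue this by pairing the identity $\lambda_{1,\prim}=c\cdot\lambda_{2,\prim}$ against any rational class $\eta\in H^n(X,\Q)$ with $\int_X\eta\wedge\lambda_{2,\prim}\neq 0$ (which exists by non-degeneracy of the intersection form over $\Q$), which expresses $c$ as a quotient of rational numbers. The algebraic portion of the argument actually works verbatim for arbitrary elements of $H^{\frac{n}{2},\frac{n}{2}}(X)_\prim$, so this final rationality check is the unique input that really uses the fact that $\lambda_1$ and $\lambda_2$ are integral Hodge cycles.
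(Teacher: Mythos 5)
Your argument is correct and is essentially the paper's proof: both work in the socle degree $\sigma$, use the Artinian Gorenstein/Macaulay perfect pairing to deduce that $J^{F,\lambda_1}_\sigma=J^{F,\lambda_2}_\sigma$ forces $[P_{\lambda_1}]=c\,[P_{\lambda_2}]$ in $R^F_\sigma$ (the paper phrases this by showing $R^{F,\lambda_1-c\cdot\lambda_2}_\sigma=0$ and invoking Proposition \ref{midef}), and then upgrade $c$ to a rational number from the integrality of $\lambda_1,\lambda_2$ by pairing against a rational class. The easy reverse implication via $(J^F:P_\lambda)$ is the same in both treatments.
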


\begin{proof}
Take some $x^\alpha\in\C[x_0,\ldots,x_{n+1}]_\sigma\setminus J^{F,\lambda_1}_\sigma$ (it must exist since $R^{F,\lambda_1}_\sigma\neq 0$). By definition \eqref{hosdef}, there exists a unique $c\in\C^\times$ such that 
$$
\int_{X} \res\left(\frac{x^\alpha\Omega}{F^{\frac{n}{2}+1}}\right)\wedge\lambda_1=c\cdot \int_{X}\res\left(\frac{x^\alpha\Omega}{F^{\frac{n}{2}+1}}\right)\wedge\lambda_2. 
$$
In consequence $x^\alpha\in J^{F,\lambda_1-c\cdot \lambda_2}$. On the other hand, since $J^{F,\lambda_1}=J^{F,\lambda_2}$ it follows that $J^{F,\lambda_1}\subset J^{F,\lambda_1-c\cdot\lambda_2}$, and this inclusion is proper. Thus $R^{F,\lambda_1-c\cdot\lambda_2}_\sigma=0$, then by Proposition \ref{midef} we have $(\lambda_1-c\cdot \lambda_2)_\prim=0$. Finally since $\lambda_1$ and $\lambda_2$ are integral classes $c\in\Q$.
\end{proof}

\begin{rmk}
Proposition \ref{tan} tells us that the degree $d$ piece of $J^{F,\lambda}$ encodes the information of the Zariski tangent space of the local Hodge locus $V_\lambda$. In other words $T_tV_{\lambda_1}=T_tV_{\lambda_2}$ if and only if $J^{F,\lambda_1}_d=J^{F,\lambda_2}_d$. Moreover, Corollary \ref{igualjotas} shows that the whole ideal $J^{F,\lambda}$ contains more information than just the first order approximation of $V_\lambda$, and in fact it determines $V_\lambda$. By this we mean that $V_{\lambda_1}=V_{\lambda_2}$ scheme-theoretically (hence for all higher order approximations) if and only if $J^{F,\lambda_1}=J^{F,\lambda_2}$. 
\end{rmk}

\begin{ex}
\label{dan}
It was proved by Dan \cite[Theorem 1.1]{Dan14} that for $d\ge 2+\frac{4}{n}$
$$
\Sigma_{(d_1,\ldots,d_{\frac{n}{2}+1})}:=\{t\in T: X_t\text{ contains a complete intersection of type }(d_1,\ldots,d_{\frac{n}{2}+1})\}
$$ 
is a component of $\HL_{n,d}$. This result is important for us and will be needed later. Because of its simplicity, we decided to reproduce Dan's argument (translated to our notation) as an example of the usefulness of considering the whole ideal $J^{F,\lambda}$ instead of just looking at its degree $d$ part (which corresponds to the Zariski tangent space of the associated Hodge locus $V_\lambda$). Consider for every $t\in \Sigma_{(d_1,\ldots,d_{\frac{n}{2}+1})}$ the Hodge cycle 
$$
\lambda=[Z]\in H^{\frac{n}{2},\frac{n}{2}}(X_t,\Z),
$$
where $Z$ is a complete intersection cycle of type $(d_1,\ldots,d_{\frac{n}{2}+1})$ given by 
$$
I(Z)=\langle f_1,\ldots,f_{\frac{n}{2}+1}\rangle,\hspace{3mm}\text{deg}(f_i)=d_i.
$$
If $X_t=\{F=0\}$, we can write $F=f_1g_1+\cdots+f_{\frac{n}{2}+1}g_{\frac{n}{2}+1}$. It follows from the definition \eqref{hosdef} of $J^{F,\lambda}$ that
$
I(Z)\subseteq J^{F,\lambda}.
$
On the other hand, if we define 
$$
Z_i:=\{f_1=\cdots=f_{i-1}=g_i=f_{i+1}=\cdots=f_{\frac{n}{2}+1}=0\},
$$
then $[Z]+[Z_i]=[X_t\cap\{f_1=\cdots =f_{i-1}=f_{i+1}=\cdots=f_{\frac{n}{2}+1}=0\}]$, and so $[Z_i]_\prim=-[Z]_\prim$. In consequence $I(Z_i)\subseteq J^{F,\lambda}$ for every $i=1,\ldots,\frac{n}{2}+1$, then 
$$
I:=\langle f_1,g_1,\ldots,f_{\frac{n}{2}+1},g_{\frac{n}{2}+1}\rangle\subseteq J^{F,\lambda}.
$$
By Macaulay's theorem \cite[Theorem 6.19]{vo03} it follows that $\soc(I)=\soc(J^{F,\lambda})$ and so $I=J^{F,\lambda}$. Therefore
$$
T_tV_\lambda=\langle f_1,g_1,\ldots,f_{\frac{n}{2}+1},g_{\frac{n}{2}+1}\rangle_d.
$$
Letting $S:=\C[x_0,\ldots,x_{n+1}]$, the tangent space of $S_{d_1}\times S_{d-d_1}\times\cdots\times S_{d_{\frac{n}{2}+1}}\times S_{d-d_{\frac{n}{2}+1}}$ at $(f_1,g_1,\ldots,f_{\frac{n}{2}+1},g_{\frac{n}{2}+1})$ goes to $\langle f_1,g_1,\ldots,f_{\frac{n}{2}+1},g_{\frac{n}{2}+1}\rangle_d$ via the map 
$$
\varphi(P_1,Q_1,\ldots,P_{\frac{n}{2}+1},Q_{\frac{n}{2}+1}):=P_1Q_1+\cdots+P_{\frac{n}{2}+1}Q_{\frac{n}{2}+1}.
$$
This implies that $V_\lambda$ is smooth and reduced. And furthermore if $\Lambda$ is the set of all complete intersections of type $(d_1,\ldots,d_{\frac{n}{2}+1})$ contained in $X_t$, then
$$
(\Sigma_{(d_1,\ldots,d_{\frac{n}{2}+1})},t)=\bigcup_{Z\in \Lambda}V_{[Z]}.
$$
Finally, to conclude that $\Sigma_{(d_1,\ldots,d_{\frac{n}{2}+1})}$ is in fact a component of $\HL_{n,d}$, it is enough to show that no local Hodge loci contains properly $V_{[Z]}$. In fact, if $V_{[Z]}\subseteq V_{\lambda'}$ then 
$$
J^{F,[Z]}_d=\langle f_1,g_1,\ldots,f_{\frac{n}{2}+1},g_{\frac{n}{2}+1}\rangle_d=T_tV_{[Z]}\subseteq T_tV_{\lambda'}=J^{F,\lambda'}_d.
$$
Since $\sigma-d_i\ge d-d_i$ it follows that for every monomial $x^\alpha$ of degree $\sigma-d_i$, $f_i\cdot x^\alpha\in J^{F,\lambda'}$ and so by the perfect pairing on $R^{F,\lambda'}$ we conclude that $f_i\in J^{F,\lambda'}$ for every $i=1,\ldots,\frac{n}{2}+1$. Analogously we prove that the $g_i$'s also belong to $J^{F,\lambda'}$ and so $J^{F,\lambda'}=J^{F,[Z]}$. By Corollary \ref{igualjotas} it follows that $V_{\lambda'}=V_{[Z]}$ as desired. 
\end{ex}

\begin{rmk}
\label{condDan}
Since $F=f_1g_1+\cdots+f_{\frac{n}{2}+1}g_{\frac{n}{2}+1}$ and $J^{F,\lambda}=\langle f_1,g_1,\ldots,f_{\frac{n}{2}+1},g_{\frac{n}{2}+1}\rangle$, we see that $F\in (J^{F,\lambda})^2$ for every complete intersection algebraic cycle $\lambda$.
\end{rmk}

\section{Preliminaries about monomials}
\label{sec3}
In this section, we recall some elementary facts about monomials and monomial ideals that will be needed later. These facts can be found in any standard reference of computational commutative algebra, we use \cite{CoxIVA} as reference. The purpose of this section is to fix our notations and to prove Proposition \ref{prophos} which is the main combinatorial fact we use to bound the Hilbert function of monomial ideals. Using monomial orderings we will reduce the analysis of the Hilbert function of the Artin Gorenstein ideal $J^{F,\lambda}$ to the case of the monomial ideals $\langle\LT(J^{F,\lambda})\rangle$ (see Proposition \ref{hilb}). Our strategy to prove Theorem \ref{thm1} and Theorem \ref{thm2} will be to characterize first the ideals $\langle\LT(J^{F,\lambda})\rangle$ and then, using Gr\"obner basis theory, characterize the ideals $J^{F,\lambda}$. 

\begin{dfn}
Consider the polynomial ring $\C[x_0,\ldots,x_{n+1}]$. Let $M$ be the set of all monomials in this ring. A \textit{monomial ordering} is a multiplicative linear order on $M$. The unique example of monomial order we will use is the \textit{lexicographic order}, that orders first the variables $x_{\sigma(0)}<\cdots<x_{\sigma(n+1)}$ for some permutation $\sigma\in \text{Perm}(0,1,\ldots,n+1)$, and then extends the order to all $M$ lexicographically.
\end{dfn}

\begin{dfn}
Consider over $\C[x_0,\ldots,x_{n+1}]$ a fixed monomial order. We define the \textit{leading term of $f\in \C[x_0,\ldots,x_{n+1}]$} by
$$
\text{LT}(f):=\text{max}\{x^\alpha\in M: \text{the monomial $x^\alpha$ appears in the expression of $f$}\}.
$$
Let $I\subseteq\C[x_0,\ldots,x_{n+1}]$ be an ideal. The \textit{leading terms ideal} is by definition
$$
\langle\text{LT}(I)\rangle:=\langle\{\text{LT}(f): f\in I\}\rangle.
$$
\end{dfn}

\begin{prop}
\label{hilb}
Let $I\subseteq\C[x_0,\ldots,x_{n+1}]$ be an ideal. Then the ideal $\langle\text{LT}(I)\rangle$ has the same Hilbert function as $I$.
\end{prop}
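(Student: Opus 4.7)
The plan is to exhibit a common monomial $\C$-basis for the graded pieces of both $\C[x_0,\ldots,x_{n+1}]/I$ and $\C[x_0,\ldots,x_{n+1}]/\langle\text{LT}(I)\rangle$, namely the so-called \emph{standard monomials} with respect to the fixed monomial order, i.e.\ the monomials $x^\alpha\notin\langle\text{LT}(I)\rangle$. Since Hilbert functions are defined for homogeneous ideals (which is the situation in all subsequent applications, where $I=J^{F,\lambda}$), I work under the assumption that $I$ is homogeneous. Then $I$ admits a homogeneous Gr\"obner basis $\{g_1,\ldots,g_s\}$, whose leading monomials $\text{LT}(g_1),\ldots,\text{LT}(g_s)$ generate $\langle\text{LT}(I)\rangle$ by definition of a Gr\"obner basis.

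The key tool is the multivariate division algorithm. Applied to any homogeneous polynomial $f\in\C[x_0,\ldots,x_{n+1}]_i$, it produces a decomposition $f=\sum_j h_j g_j + r$ where each $h_j$ is homogeneous of degree $i-\deg(g_j)$ and $r$ is a $\C$-linear combination of standard monomials, all of degree $i$. This immediately shows that the classes of the degree $i$ standard monomials span $\C[x_0,\ldots,x_{n+1}]_i/I_i$. Linear independence follows from the defining property of a Gr\"obner basis: any nonzero element of $I$ has its leading monomial inside $\langle\text{LT}(I)\rangle$, so no nontrivial $\C$-linear combination of standard monomials can belong to $I$, since its leading monomial would simultaneously be a standard monomial and lie in $\langle\text{LT}(I)\rangle$.

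The same argument, applied verbatim to the monomial ideal $\langle\text{LT}(I)\rangle$ itself (whose minimal monomial generators are automatically a Gr\"obner basis), shows that the classes of the degree $i$ standard monomials also form a $\C$-basis of $\C[x_0,\ldots,x_{n+1}]_i/\langle\text{LT}(I)\rangle_i$. Consequently both graded quotients have the same dimension in every degree, which is exactly the statement that $I$ and $\langle\text{LT}(I)\rangle$ share the same Hilbert function.

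There is no genuine obstacle here; this is a textbook application of Gr\"obner basis theory whose only subtle point is checking that homogeneity is preserved throughout, so that the counting argument can be carried out graded piece by graded piece. A complete reference is \cite{CoxIVA}.
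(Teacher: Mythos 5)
Your proof is correct and is precisely the standard Gr\"obner-basis argument (standard monomials give a common monomial basis of both quotients, degree by degree) that underlies the reference the paper cites; the paper itself offers no proof beyond pointing to \cite[Proposition 4, page 458]{CoxIVA}. Your restriction to homogeneous ideals is in fact the right refinement rather than a loss of generality: since the paper only uses the lexicographic order, which is not graded, the statement can fail for non-homogeneous ideals, while every ideal to which the proposition is applied (e.g.\ $J^{F,\lambda}$ and $\langle \LT(J^{F,\lambda})\rangle$) is homogeneous, so your argument covers all cases needed.
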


\begin{proof}
See \cite[Proposition 4, page 458]{CoxIVA}.
\end{proof}

\begin{rmk}
\label{comp}
In particular, in order to compute the dimension of $\C[x_0,\ldots,x_{n+1}]_d/I_d$ it is enough to compute the dimension of $\C[x_0,\ldots,x_{n+1}]_d/\langle\text{LT}(I)\rangle_d$, and since $\langle\text{LT}(I)\rangle$ is a monomial ideal
$$
\dim_\C\left(\frac{\C[x_0,\ldots,x_{n+1}]}{\langle\text{LT}(I)\rangle}\right)_d=\#\{x^\alpha\in\C[x_0,\ldots,x_{n+1}]\setminus\langle\text{LT}(I)\rangle: \text{deg}(x^\alpha)=d\}.
$$
\end{rmk}

\begin{dfn}
Consider over $\C[x_0,\ldots,x_{n+1}]$ a fixed monomial order. Let $I=\langle f_1,\ldots,f_k\rangle\subseteq\C[x_0,\ldots,x_{n+1}]$ be an ideal. We say that $f_1,\ldots,f_k$ is a \textit{Gr\"obner basis of $I$ (with respect to the monomial order)} if
$$
\langle \LT(I)\rangle=\langle \LT(f_1),\ldots,\LT(f_k)\rangle.
$$
\end{dfn}

\begin{prop}
\label{grob}
Consider over $\C[x_0,\ldots,x_{n+1}]$ a fixed monomial order. Let $I=\langle f_1,\ldots,f_k\rangle\subseteq\C[x_0,\ldots,x_{n+1}]$ be an ideal. The generators $f_1,\ldots,f_k\in \C[x_0,\ldots,x_{n+1}]$ are a Gr\"obner basis of $I$ if and only if the polynomials $g\in \C[x_0,\ldots,x_{n+1}]$ leaving zero residue after dividing it by $f_1,\ldots,f_k$ are exactly the $g\in I$.
\end{prop}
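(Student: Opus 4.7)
The plan is to prove both directions of the equivalence by running the multivariate division algorithm. I first recall the two properties I will use: for any ordered tuple $(f_1,\ldots,f_k)$ and any $g\in\C[x_0,\ldots,x_{n+1}]$, the division algorithm produces an expression $g=q_1f_1+\cdots+q_kf_k+r$ such that (i) no monomial appearing in $r$ is divisible by any of $\LT(f_1),\ldots,\LT(f_k)$, and (ii) whenever $q_if_i\neq 0$ one has $\LT(q_if_i)\leq \LT(g)$ in the fixed monomial ordering. Both are standard and appear in \cite[Theorem 3, Chapter 2]{CoxIVA}; they will drive the entire argument.

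For the forward implication, assume $f_1,\ldots,f_k$ is a Gr\"obner basis of $I$. One direction is immediate: if dividing $g$ leaves zero remainder then $g=\sum q_if_i\in I$. For the reverse direction, take $g\in I$ and apply the division algorithm to obtain $g=\sum q_if_i+r$; then $r=g-\sum q_if_i\in I$, so if $r\neq 0$ we would have $\LT(r)\in\langle\LT(I)\rangle=\langle\LT(f_1),\ldots,\LT(f_k)\rangle$. Because the latter is a monomial ideal, $\LT(r)$ must be divisible by some $\LT(f_i)$, contradicting property (i). Hence $r=0$.

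For the converse implication, assume that every element of $I$ divides with zero remainder. I need to show $\langle\LT(I)\rangle\subseteq\langle\LT(f_1),\ldots,\LT(f_k)\rangle$, as the opposite inclusion is automatic from $f_i\in I$. Given any nonzero $f\in I$, the hypothesis gives $f=\sum q_if_i$ with no remainder term. By property (ii) each nonzero $\LT(q_if_i)$ satisfies $\LT(q_if_i)\leq \LT(f)$; if all these inequalities were strict, the right-hand side would have leading term strictly below $\LT(f)$, contradicting the equality of polynomials. Therefore $\LT(f)=\LT(q_j)\LT(f_j)$ for some index $j$, placing $\LT(f)$ in $\langle\LT(f_j)\rangle\subseteq\langle\LT(f_1),\ldots,\LT(f_k)\rangle$, as required.

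The proof is essentially a bookkeeping exercise around the division algorithm, so there is no deep obstacle. The only point demanding a moment of care is the last observation in the converse: one must explicitly note that at least one $\LT(q_if_i)$ must equal $\LT(f)$, since otherwise the leading terms would all cancel below $\LT(f)$ and the equality $f=\sum q_if_i$ would fail at the top degree. Once this is made explicit, both directions follow in a handful of lines.
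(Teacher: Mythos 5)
Your proof is correct. The paper itself gives no argument here, merely citing \cite[\S 6, Proposition 1 and Corollary 2, page 82]{CoxIVA}; your division-algorithm argument is exactly the standard one behind that citation, and you correctly handle both directions, including the converse, where the key point (that some $\LT(q_if_i)$ must equal $\LT(f)$, by property (ii) plus non-cancellation at the top monomial) is made explicit.
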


\begin{proof}
 See \cite[\S 6, Proposition 1 and Corollary 2, page 82]{CoxIVA}.
\end{proof}

We close this section with the following elementary proposition about monomials which was inspired by \cite[Proposition 8]{GMCD-NL}.

\begin{prop}
\label{prophos}
Let $x^\alpha\in \C[x_0,\ldots,x_{n+1}]$ be any monomial. For every $d>0$ let
$$
S^d_\alpha:=\{x^\beta\in \C[x_0,\ldots,x_{n+1}]_d: x^\beta|x^\alpha\}.
$$
Then, for any $i\neq j$ such that $0<\alpha_i\le \alpha_j$,
\begin{equation}
\label{tecdes}
\#S^d_{\alpha'}\le\#S^d_\alpha
\end{equation}
where $\alpha'_k:=\alpha_k$ for $k\neq i,j$, $\alpha'_i:=\alpha_i-1$ and $\alpha'_j:=\alpha_j+1$. Moreover \eqref{tecdes} is a strict inequality if $\alpha_j\le d\le \text{deg}(x^\alpha)-\alpha_i$. As a consequence, for every $x^\alpha|(x_0\cdots x_{n+1})^{d-2}$ of $\text{deg}(x^\alpha)=\sigma=(d-2)(\frac{n}{2}+1)$ we have
$$
\#S^d_\alpha\ge {\frac{n}{2}+d\choose d}-(\frac{n}{2}+1)^2,
$$
with equality if and only if 
\begin{equation}
\label{al1}
\#\{0\le i\le n+1:\alpha_i=d-2\}=\#\{0\le i\le n+1:\alpha_i=0\}=\frac{n}{2}+1,    
\end{equation}
i.e. up to some relabeling of the coordinates $\alpha=(0,\ldots,0,d-2,\ldots,d-2)$. Furthermore, for those $\alpha$ not satisfying \eqref{al1}, we have for $d\ge 4$ that
$$
\#S^d_\alpha\ge {\frac{n}{2}+d\choose d}+{\frac{n}{2}+d-1\choose d-1}-\left(\frac{3n^2}{8}+\frac{9n}{4}+2\right),
$$
with equality if and only if up to some relabeling of the coordinates $\alpha=(0,\ldots,0,1,d-3,d-2,\ldots,d-2)$.
\end{prop}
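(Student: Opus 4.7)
The plan is to establish \eqref{tecdes} first via a generating-function identity, and then use it iteratively as a reduction step, bringing any feasible $\alpha$ to one of two explicit extremal configurations whose values of $\#S^d_\alpha$ can be computed by inclusion--exclusion.

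First, writing $f_a(t):=1+t+\cdots+t^a$, one has $\#S^d_\alpha=[t^d]\prod_{k=0}^{n+1}f_{\alpha_k}(t)$. The identities $f_a=f_{a-1}+t^a$ and $f_{b+1}=f_b+t^{b+1}$ give, for $\alpha_i\le\alpha_j$, the telescoping
\[
f_{\alpha_i}f_{\alpha_j}-f_{\alpha_i-1}f_{\alpha_j+1}=t^{\alpha_i}f_{\alpha_j}-t^{\alpha_j+1}f_{\alpha_i-1}=t^{\alpha_i}+t^{\alpha_i+1}+\cdots+t^{\alpha_j}.
\]
Multiplying by the non-negative-coefficient polynomial $\prod_{k\ne i,j}f_{\alpha_k}(t)$ and reading off $[t^d]$ yields \eqref{tecdes}. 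Strict positivity of the resulting coefficient amounts to the existence of an exponent $e\in[\alpha_i,\alpha_j]$ with $0\le d-e\le \deg(x^\alpha)-\alpha_i-\alpha_j$, which a short check reduces to the stated condition $\alpha_j\le d\le\deg(x^\alpha)-\alpha_i$.

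Next, I would interpret \eqref{tecdes} as saying that $\#S^d_\alpha$ is weakly decreasing in the majorization partial order on $\alpha$: the move $(\alpha_i,\alpha_j)\mapsto(\alpha_i-1,\alpha_j+1)$ (with $0<\alpha_i\le\alpha_j$) increases majorization. Within the feasible set $\{\alpha:\alpha_k\le d-2,\ \sum_k\alpha_k=\sigma\}$, the majorization-maximum (up to permutation) is the unique element \eqref{al1}, namely $\alpha^*=(0,\ldots,0,d-2,\ldots,d-2)$. Iterating the reduction step from any feasible $\alpha$ down to $\alpha^*$ gives $\#S^d_\alpha\ge \#S^d_{\alpha^*}$, with equality only when $\alpha=\alpha^*$. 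A direct enumeration of degree-$d$ monomials in $\frac{n}{2}+1$ variables with all exponents $\le d-2$ then gives $\#S^d_{\alpha^*}=\binom{\frac{n}{2}+d}{d}-(\frac{n}{2}+1)^2$: since $2(d-1)>d$, the only ``bad'' monomials are the $\frac{n}{2}+1$ powers $x_j^d$ and the $(\frac{n}{2}+1)\frac{n}{2}$ products $x_j^{d-1}x_k$.

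For the last assertion, I would identify the unique second-maximal majorization element of the feasible set as $\alpha^{**}=(0,\ldots,0,1,d-3,d-2,\ldots,d-2)$: it is the only configuration reachable from $\alpha^*$ by a single reverse move, since the only admissible reverse move (respecting $\alpha_k\le d-2$) turns a $0$ into a $1$ and a $d-2$ into a $d-3$. Hence for any feasible $\alpha$ not of the form \eqref{al1} one has $\#S^d_\alpha\ge\#S^d_{\alpha^{**}}$, with equality iff $\alpha=\alpha^{**}$. The hard part will be to compute $\#S^d_{\alpha^{**}}$ in closed form: an inclusion--exclusion in $\frac{n}{2}+2$ variables with the mixed bounds $\beta_1\le 1$, $\beta_2\le d-3$, and $\beta_j\le d-2$ for $j\ge 3$ must be carried out, checking for $d\ge 4$ that almost all pairwise violations force the total degree to exceed $d$ (and so vanish), and finally rearranging the surviving terms into the closed form $\binom{\frac{n}{2}+d}{d}+\binom{\frac{n}{2}+d-1}{d-1}-\bigl(\tfrac{3n^2}{8}+\tfrac{9n}{4}+2\bigr)$.
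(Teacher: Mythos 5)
Your proposal is correct in substance but follows a genuinely different route for the key inequality \eqref{tecdes}: the paper proves it by exhibiting an explicit injection $S^d_{\alpha'}\hookrightarrow S^d_\alpha$ (multiplying by $x_i/x_j$ when $\beta'_j>0$ and by $(x_j/x_i)^{\beta'_i}$ otherwise) and gets strictness by pointing at monomials with $\beta_i=0$, $\beta_j=\alpha_j$ that miss the image, whereas you obtain both the inequality and the strictness criterion from the generating-function identity $f_{\alpha_i}f_{\alpha_j}-f_{\alpha_i-1}f_{\alpha_j+1}=t^{\alpha_i}+\cdots+t^{\alpha_j}$, which is cleaner and makes the exact surplus visible. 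For the ``as a consequence'' statements the paper's proof says nothing at all, so your dominance-order reduction is actually more explicit than the source: it is the right way to organize the iteration, and your identification of $\alpha^*$ and of $\alpha^{**}$ as the top two feasible configurations is correct (for the second, the clean justification is via partial sums: any feasible $\alpha$ not a relabeling of $\alpha^*$ has its top $\frac{n}{2}+1$ entries summing to at most $\sigma-1$, hence is dominated by $\alpha^{**}$; ``unique element reachable by one reverse move'' alone does not immediately give that every lower element sits below it). Two items remain to be written out to match the full statement: (i) the equality characterizations need the strictness criterion applied along the chain — e.g.\ the move $\alpha^{**}\to\alpha^*$ is strict exactly when $d\le\sigma-1$, and for the second bound one should note that for $n\ge 4$, $d\ge 4$ every unit move is strict since all entries are at most $d-2\le\sigma-d$; this is where the paper's standing hypotheses (such as $d\ge 2+\frac{6}{n}$ in its applications) quietly enter, and without them there are boundary cases like $(n,d)=(2,4)$ where $\sigma=d$ and all counts equal $1$; (ii) the deferred inclusion–exclusion for $\#S^d_{\alpha^{**}}$ is routine for $d\ge4$ (no double violations since $(d-2)+(d-1)>d$) and does land on $\binom{\frac{n}{2}+d}{d}+\binom{\frac{n}{2}+d-1}{d-1}-\bigl(\frac{3n^2}{8}+\frac{9n}{4}+2\bigr)$, consistent with Voisin's $2d-7$ at $n=2$. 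So: same skeleton as the paper for what the paper actually proves, a different and arguably tidier mechanism for \eqref{tecdes}, and a sound plan for the counting consequences that only needs these finishing details.
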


\begin{proof}
Consider the map $f: S^d_{\alpha'}\rightarrow S^d_\alpha$ given by
$$
f(x^{\beta'}):= \left\{
	\begin{array}{ll}
		   x^{\beta'}\cdot(x_i/x_j)  & \mbox{if } \beta'_j>0, \\
		x^{\beta'}\cdot(x_j/x_i)^{\beta'_i} & \mbox{if }\beta'_j=0. 
	\end{array}
\right.
$$
This map is clearly injective. For the case $\alpha_j\le d\le \text{deg}(x^\alpha)-\alpha_i$, $f$ is not surjective since there exist $x^\beta\in S^d_\alpha$ with $\beta_i=0$ and $\beta_j=\alpha_j$. These monomials do not belong to the image of $f$.
\end{proof}

\section{Reduction of Theorem \ref{thm1} to Theorem \ref{thm1.1}}
\label{sec4.1}
In this section we give a proof of Theorem \ref{thm1} assuming Theorem \ref{thm1.1}. In order to establish the bound \eqref{cotatangent} we use Proposition \ref{tan} to reduce ourselves to bounding the Hilbert function of $J^{F,\lambda}$. Following the strategy explained in \S \ref{sec3}, we reduce ourselves to study the Hilbert function of the monomial ideal $\langle\LT(J^{F,\lambda})\rangle$, which in turn can be bounded using Proposition \ref{prophos}. Putting all this together we obtain the following proposition.

\begin{prop}
\label{prop6}
Let $X=\{F=0\}\subseteq \P^{n+1}$ be a smooth hypersurface of even dimension $n$ and degree $d\ge 2+\frac{4}{n}$. Let $\lambda\in F^\frac{n}{2}H^n_\dR(X)_\prim\setminus F^{\frac{n}{2}+1}H^n_\dR(X)_\prim$ and $\sigma:=(d-2)(\frac{n}{2}+1)$. If for some monomial order there exists $x^\alpha\in \C[x_0,\ldots,x_{n+1}]_\sigma\setminus \langle \LT(J^{F,\lambda})\rangle_\sigma$ such that $x^\alpha|x_0^{d-2}\cdots x_{n+1}^{d-2}$, then 
\begin{equation}
\label{des1}    
\dim_\C \ R^{F,\lambda}_d\ge {\frac{n}{2}+d\choose d}-(\frac{n}{2}+1)^2.
\end{equation}
Furthermore if the equality holds in \eqref{des1} and $d\ge 2+\frac{6}{n}$, then there exist $L_1,\ldots,L_{\frac{n}{2}+1}\in J^{F,\lambda}_1$ linearly independent. 
\end{prop}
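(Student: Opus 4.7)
The plan is to first use Proposition \ref{tan} to reduce the codimension statement to $\dim_\C R^{F,\lambda}_d$, then to leverage Proposition \ref{hilb} together with Remark \ref{comp} to pass to the monomial ideal $\langle\LT(J^{F,\lambda})\rangle$. Since $x^\alpha\notin\langle\LT(J^{F,\lambda})\rangle_\sigma$ and the complement of a monomial ideal is downward closed under divisibility, every divisor of $x^\alpha$ is outside this ideal; in particular, the monomials of $S^d_\alpha$ are linearly independent in $R^{F,\lambda}_d$, giving $\dim_\C R^{F,\lambda}_d\ge\#S^d_\alpha$. The hypotheses $x^\alpha\mid x_0^{d-2}\cdots x_{n+1}^{d-2}$ and $\deg x^\alpha=\sigma$ then allow Proposition \ref{prophos} to yield $\#S^d_\alpha\ge\binom{n/2+d}{d}-(n/2+1)^2$, proving \eqref{des1}.

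For the equality case, the rigidity part of Proposition \ref{prophos} forces $\alpha$ to satisfy \eqref{al1}; after relabeling, $x^\alpha=x_{n/2+1}^{d-2}\cdots x_{n+1}^{d-2}$, and $S^d_\alpha$ consists exactly of the degree-$d$ monomials in $\{x_{n/2+1},\ldots,x_{n+1}\}$ with each exponent at most $d-2$, forming a $\C$-basis of $R^{F,\lambda}_d$. Let $A\subseteq R^{F,\lambda}$ be the graded subalgebra generated by $\bar x_{n/2+1},\ldots,\bar x_{n+1}$; then $A_d\supseteq\mathrm{span}(S^d_\alpha)=R^{F,\lambda}_d$, hence $A_d=R^{F,\lambda}_d$. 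Since $\bar x_j^{d-1}=0$ in $R^F$ (hence in $R^{F,\lambda}$) for $j\ge n/2+1$, $A$ is a quotient of the complete intersection $\C[y_0,\ldots,y_{n/2}]/\langle y_0^{d-1},\ldots,y_{n/2}^{d-1}\rangle$ whose Hilbert function I denote by $\phi$; the matching dimension $\dim_\C A_d=\phi(d)$ forces this quotient to be an isomorphism, so $A$ is Artin Gorenstein of socle $\sigma$ with $A_\sigma=\C\cdot\bar x^\alpha=R^{F,\lambda}_\sigma$.

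The crux is to upgrade $A_d=R^{F,\lambda}_d$ to $A=R^{F,\lambda}$. The hypothesis $d\ge 2+6/n$ is equivalent to $\sigma\ge d+1$, which guarantees $\sigma-d\ge 1$ and $\sigma-1\ge d$. First, $\dim_\C A_{\sigma-d}=\dim_\C A_d=\phi(d)=\dim_\C R^{F,\lambda}_d=\dim_\C R^{F,\lambda}_{\sigma-d}$ by Gorenstein symmetry of both $A$ and $R^{F,\lambda}$ (Proposition \ref{midef}), which combined with $A_{\sigma-d}\subseteq R^{F,\lambda}_{\sigma-d}$ yields $A_{\sigma-d}=R^{F,\lambda}_{\sigma-d}$. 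A calculation in $R^F$ shows that $\{\bar x^{\alpha-\beta}\}_{\beta\in S^d_\alpha}\subseteq A_{\sigma-d}$ is the dual basis of $S^d_\alpha$ under the $R^{F,\lambda}$-Gorenstein pairing (the cross terms $\bar x^{\beta+\alpha-\beta'}$ vanish in $R^F$ for $\beta\neq\beta'$ because some exponent exceeds $d-2$), so the pairing of $R^{F,\lambda}$ restricted to $A\times A$ coincides with the Gorenstein pairing of $A$. An induction on degree, using multiplicativity $R^{F,\lambda}_{k+1}=R^{F,\lambda}_1\cdot R^{F,\lambda}_k$ together with Macaulay's growth bound on $R^{F,\lambda}$ and the equality of the socles, propagates $A_k=R^{F,\lambda}_k$ to all $k$. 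In particular $A_1=R^{F,\lambda}_1$, so $\dim_\C R^{F,\lambda}_1\le n/2+1$ and $\dim_\C J^{F,\lambda}_1\ge n/2+1$. Writing each $\bar x_i$ for $i\le n/2$ as a $\C$-linear combination of $\bar x_{n/2+1},\ldots,\bar x_{n+1}$ in $R^{F,\lambda}_1$ produces the required linearly independent linear forms $L_i:=x_i-\sum_{j\ge n/2+1}a_{ij}x_j\in J^{F,\lambda}_1$.

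The main obstacle is the propagation step: establishing $A_k=R^{F,\lambda}_k$ in \emph{all} intermediate degrees from the two identities $A_d=R^{F,\lambda}_d$ and $A_{\sigma-d}=R^{F,\lambda}_{\sigma-d}$. Macaulay's growth bound alone is not sharp enough in small cases (e.g.\ $n=4$, $d=4$, where $\phi(d)^{\langle d\rangle}=7$ while $\phi(d{+}1)=3$); the extra rigidity needed is supplied by the explicit form of the basis $S^d_\alpha$, which realizes the complete intersection Hilbert function $\phi$ and, together with the matching of Gorenstein socles $A_\sigma=R^{F,\lambda}_\sigma$, pins down the comparison $A\hookrightarrow R^{F,\lambda}$ to an equality.
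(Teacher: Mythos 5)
Your first paragraph is fine and is exactly the paper's argument: divisors of $x^\alpha$ lie outside $\langle\LT(J^{F,\lambda})\rangle$, so by Proposition \ref{hilb} and Remark \ref{comp} one gets $\dim_\C R^{F,\lambda}_d\ge \#S^d_\alpha$, and Proposition \ref{prophos} gives \eqref{des1} together with the rigidity statement \eqref{al1} in the equality case. The problems are in your treatment of the equality case, and they are genuine. First, you repeatedly use the relation $\bar x_j^{d-1}=0$ in $R^F$ (to present $A$ as a quotient of $\C[y_0,\ldots,y_{\frac{n}{2}}]/\langle y_0^{d-1},\ldots,y_{\frac{n}{2}}^{d-1}\rangle$, and again in the ``cross terms vanish'' step of your dual-basis computation). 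This is a Fermat-specific fact: the proposition is stated for an \emph{arbitrary} smooth hypersurface $X=\{F=0\}$ of degree $d$, where $J^F$ is not monomial and $x_j^{d-1}$ need not lie in $J^F$ or in $J^{F,\lambda}$; the only monomial information you are given is that $x^\alpha\notin\langle\LT(J^{F,\lambda})\rangle_\sigma$. Second, even granting that surjection, ``matching dimension in degree $d$ forces the quotient to be an isomorphism'' is not a valid inference (a nonzero ideal of an Artinian Gorenstein algebra can be concentrated in degrees above $d$); it could be repaired via the socle, using $\bar x^\alpha\neq 0$ in $A_\sigma$, but only inside the unavailable complete-intersection picture. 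Third, and most importantly, the step you yourself flag as the crux --- propagating $A_d=R^{F,\lambda}_d$ and $A_{\sigma-d}=R^{F,\lambda}_{\sigma-d}$ to $A_1=R^{F,\lambda}_1$ --- is never actually carried out: you concede that Macaulay's growth bound is too weak and appeal to unspecified ``extra rigidity''. Since the whole point of the second assertion is precisely the degree-one statement $\dim_\C J^{F,\lambda}_1\ge\frac{n}{2}+1$, this leaves the proof incomplete.

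There is a much more direct route, which is the one the paper takes and which only uses the monomial data you already have. In the equality case the monomials of degree $d$ outside $\langle\LT(J^{F,\lambda})\rangle$ are exactly $S^d_\alpha$, i.e.
$\langle\LT(J^{F,\lambda})\rangle_d=\langle x_0,x_2,\ldots,x_n,x_1^{d-1},\ldots,x_{n+1}^{d-1}\rangle_d$ (after the paper's relabeling with $x^\alpha=x_1^{d-2}x_3^{d-2}\cdots x_{n+1}^{d-2}$). Any monomial of degree $k>d$ in the right-hand ideal is divisible by one of its degree-$d$ monomials, hence lies in $\langle\LT(J^{F,\lambda})\rangle_k$; taking $k=\sigma-1\ge d$ (this is where $d\ge 2+\frac{6}{n}$ enters) gives $\dim_\C R^{F,\lambda}_{\sigma-1}\le\frac{n}{2}+1$, and Gorenstein duality for $R^{F,\lambda}$ (Proposition \ref{midef}) transfers this to degree $1$: $\dim_\C R^{F,\lambda}_1=\frac{n}{2}+1$, so the even variables occur as leading terms of elements $L_1,\ldots,L_{\frac{n}{2}+1}\in J^{F,\lambda}_1$, which are then linearly independent. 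No auxiliary subalgebra, dual basis, or induction on degrees is needed.
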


\begin{proof}
Under the hypothesis it follows from Proposition \ref{hilb} and Remark \ref{comp} that 
\begin{equation}
\label{dessalfa}
\dim_\C \ R^{F,\lambda}_d\ge \#S^d_\alpha.
\end{equation}
Thus \eqref{des1} follows directly from Proposition \ref{prophos}. Assuming that the equality holds for $d\ge 2+\frac{6}{n}$, again by Proposition \ref{prophos} 
\begin{equation}
\label{monmin}    
x^\alpha=x_{i_1}^{d-2}\cdots x_{i_{\frac{n}{2}+1}}^{d-2}
\end{equation}
for some $0\le i_1<\cdots<i_{\frac{n}{2}+1}\le n+1$. After relabeling the variables we can assume $x^\alpha=x_1^{d-2}x_3^{d-2}\cdots x_{n+1}^{d-2}$. Then for every $k\ge 0$
$$
S^k_\alpha\subseteq \{x^\beta\in\C[x_0,\ldots,x_{n+1}]_k\setminus \langle \LT(J^{F,\lambda})\rangle_k: x^\beta\text{ is a monomial}\},
$$ 
otherwise there would be a monomial $x^\beta|x^\alpha$ inside $\langle\LT(J^{F,\lambda})\rangle$ but this contradicts the fact $x^\alpha\notin\langle\LT(J^{F,\lambda})\rangle$. We claim both sets are equal for $k=1$, in other words we claim 
\begin{equation}
\label{igualmonom}
\langle \LT(J^{F,\lambda})\rangle_k\subseteq\langle x_0,x_2,\ldots,x_n,x_1^{d-1},x_3^{d-1},\ldots,x_{n+1}^{d-1}\rangle_k
\end{equation}
is an equality for $k=1$. By Remark \ref{comp} we know \eqref{igualmonom} is an equality for $k=d$, since both sides have the same codimension inside $\C[x_0,\ldots,x_{n+1}]_d$. For $k>d$, take any 
$$
x^\beta\in\langle x_0,x_2,\ldots,x_n,x_1^{d-1},x_3^{d-1},\ldots,x_{n+1}^{d-1}\rangle_k.
$$
Then for some $i\in\{0,\ldots,n+1\}$, $\beta_i\ge 1$ and $i$ is even, or $\beta_i\ge d-1$ and $i$ is odd. In any case we can produce some $x^\gamma|x^\beta$ such that $x^\gamma\in \langle x_0,x_2,\ldots,x_n,x_1^{d-1},x_3^{d-1},\ldots,x_{n+1}^{d-1}\rangle_d=\langle \LT(J^{F,\lambda})\rangle_d$ and so $x^\beta\in\langle \LT(J^{F,\lambda})\rangle_k$. Since $d\ge 2+\frac{6}{n}$ then $\sigma-1\ge d$ and so the equality follows for $k=1$ by duality. Therefore there exist $L_i\in J^{F,\lambda}_1$ such that $\LT(L_i)=x_{2i-2}$ for $i=1,\ldots,\frac{n}{2}+1$ as desired.
\end{proof}

\begin{rmk}
Note that the condition $d\ge 2+\frac{6}{n}$ cannot be improved since for $(n,d)=(2,4), (4,3)$ the equality in \eqref{des1} is attained by all local Hodge loci $V_\lambda$.
\end{rmk}

The next proposition essentially reduces the proof of Theorem \ref{thm1} to proving Theorem \ref{thm1.1}.

\begin{prop}
\label{prop7}
Let $X=\{F=0\}\subseteq \P^{n+1}$ be a smooth degree $d$ hypersurface of even dimension $n$, and $\lambda\in H^{\frac{n}{2},\frac{n}{2}}(X,\Z)$ be a non-trivial Hodge cycle. If there exist $L_1,\ldots,L_{\frac{n}{2}+1}\in J^{F,\lambda}_1$ linearly independent, such that 
$$
\P^\frac{n}{2}:=\{L_1=L_2=\cdots=L_{\frac{n}{2}+1}=0\}\subseteq X.
$$
Then there exists $c\in\Q^\times$ such that
$
\lambda_\prim=c\cdot[\P^\frac{n}{2}]_\prim,
$
and so $V_\lambda=V_{[\P^\frac{n}{2}]}$.
\end{prop}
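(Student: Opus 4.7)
The plan is to identify the Artinian Gorenstein ideals $J^{F,\lambda}$ and $J^{F,[\P^\frac{n}{2}]}$ and then invoke Corollary \ref{igualjotas}.

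First, since $\P^\frac{n}{2}=V(L_1,\ldots,L_{\frac{n}{2}+1})\subseteq X=\{F=0\}$, we have $F\in I(\P^\frac{n}{2})=\langle L_1,\ldots,L_{\frac{n}{2}+1}\rangle$, so I can write
$$
F=L_1 g_1+\cdots+L_{\frac{n}{2}+1}g_{\frac{n}{2}+1}
$$
for suitable $g_j\in\C[x_0,\ldots,x_{n+1}]_{d-1}$. Applying Example \ref{dan} to the complete intersection $\P^\frac{n}{2}$ of type $(1,\ldots,1)$ then yields
$$
J^{F,[\P^\frac{n}{2}]}=\langle L_1,g_1,\ldots,L_{\frac{n}{2}+1},g_{\frac{n}{2}+1}\rangle,
$$
an Artinian Gorenstein ideal of socle $\sigma=(d-2)(\frac{n}{2}+1)$.

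The main step will be to show $J^{F,[\P^\frac{n}{2}]}\subseteq J^{F,\lambda}$. The $L_i$'s lie in $J^{F,\lambda}$ by hypothesis, so the content is to prove that each $g_j$ also lies in $J^{F,\lambda}$. The key observation is the Jacobian identity: differentiating $F=\sum L_j g_j$ yields
$$
\frac{\partial F}{\partial x_i}\equiv\sum_{j=1}^{\frac{n}{2}+1}\frac{\partial L_j}{\partial x_i}\,g_j\pmod{\langle L_1,\ldots,L_{\frac{n}{2}+1}\rangle}\quad\text{for every }i=0,\ldots,n+1.
$$
Because $L_1,\ldots,L_{\frac{n}{2}+1}$ are linearly independent linear forms, the constant matrix $\bigl(\partial L_j/\partial x_i\bigr)$ has full column rank $\frac{n}{2}+1$, so I can solve for each $g_j$ as a $\C$-linear combination of $\{\partial F/\partial x_0,\ldots,\partial F/\partial x_{n+1}\}$ modulo $\langle L_1,\ldots,L_{\frac{n}{2}+1}\rangle$. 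Since $J^F\subseteq J^{F,\lambda}$ by Proposition \ref{midef} and the $L_i$'s are in $J^{F,\lambda}$ by assumption, this forces $g_j\in J^{F,\lambda}$ for every $j$.

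To close, both $J^{F,\lambda}$ and $J^{F,[\P^\frac{n}{2}]}$ are Artinian Gorenstein of the same socle $\sigma$ by Proposition \ref{midef}, and I have just established containment of one in the other. The elementary property recorded in the remark following the definition of Artinian Gorenstein ideals then forces equality $J^{F,[\P^\frac{n}{2}]}=J^{F,\lambda}$, and Corollary \ref{igualjotas} supplies a $c\in\Q^\times$ with $\lambda_\prim=c\cdot[\P^\frac{n}{2}]_\prim$. The scheme-theoretic identity $V_\lambda=V_{[\P^\frac{n}{2}]}$ follows from the remark immediately after that corollary. The only non-formal step in this plan is the assertion that $g_j\in J^{F,\lambda}$, handled by the short Jacobian argument above; everything else is a direct consequence of results already proved in Section \ref{sec2}.
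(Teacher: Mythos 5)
Your proposal is correct and follows essentially the same route as the paper: write $F=\sum_j L_j g_j$, recover each $g_j$ modulo $\langle L_1,\ldots,L_{\frac{n}{2}+1}\rangle$ from the partial derivatives of $F$ using the linear independence of the $L_j$ (the paper phrases this via derivations $D_i$ with $D_i(L_j)=\delta_{ij}$, which is the same left-inverse argument), conclude $J^{F,[\P^{\frac{n}{2}}]}\subseteq J^{F,\lambda}$, and finish by equality of socles and Corollary \ref{igualjotas}. No gaps.
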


\begin{proof}
Since $\P^\frac{n}{2}\subseteq X$, we can write 
$$
F=L_1Q_1+\cdots+L_{\frac{n}{2}+1}Q_{\frac{n}{2}+1}.
$$
Choosing derivations $D_1,\ldots,D_{\frac{n}{2}+1}$ such that $D_i(L_j)=\delta_{ij}$ for every $i=1,\ldots,\frac{n}{2}+1$ we get
$$
Q_i=D_i(F)-\sum_{k=1}^{\frac{n}{2}+1}L_kD_i(Q_k)\in J^{F,\lambda}.
$$
In consequence (see Example \ref{dan} for the first equality)
$$
J^{F,[\P^\frac{n}{2}]}=\langle L_1,Q_1,\ldots,L_{\frac{n}{2}+1},Q_{\frac{n}{2}+1}\rangle\subseteq J^{F,\lambda},
$$
hence $J^{F,[\P^\frac{n}{2}]}=J^{F,\lambda}$ and the result follows by Corollary \ref{igualjotas}.
\end{proof}

\begin{proof}\textbf{of Theorem \ref{thm1}} Let $\Sigma$ be a component of $\HL_{n,d}$ passing through the Fermat variety $0\in \HL_{n,d}$, i.e. $X_0=\{F=0\}$ for $F=x_0^d+\cdots+x_{n+1}^d$. Then there exists some $\lambda\in H^{\frac{n}{2},\frac{n}{2}}(X_0,\Z)$ such that $V_\lambda$ is a component of $(\Sigma,0)$. By Proposition \ref{tan} it follows that
$$
\codim_T \ \Sigma=\codim_T \ V_\lambda\ge \codim_{T_0T} \ T_0V_\lambda=\dim_\C \ R^{F,\lambda}_d.
$$
Consider any monomial order and take $x^\alpha\in \C[x_0,\ldots,x_{n+1}]_\sigma\setminus \langle \LT(J^{F,\lambda})\rangle_\sigma$ for 
$$
\sigma=(d-2)(\frac{n}{2}+1)=\soc(J^{F,\lambda}).
$$
Since $J^F\subseteq J^{F,\lambda}$, it follows that $x^\alpha\notin J^F$, and so $x^\alpha|x_0^{d-2}\cdots x_{n+1}^{d-2}$. Applying Proposition \ref{prop6} we get the desired bound
$$
\codim_T \ \Sigma\ge {\frac{n}{2}+d\choose d}-(\frac{n}{2}+1)^2. 
$$
And if the equality holds, it follows from Proposition \ref{prop6} that there exist $L_1,\ldots,L_{\frac{n}{2}+1}\in J^{F,\lambda}_1$ linearly independent. By Theorem \ref{thm1.1} it follows that 
$$
\P^\frac{n}{2}:=\{L_1=L_2=\cdots=L_{\frac{n}{2}+1}=0\}\subseteq X,
$$
and so, by Proposition \ref{prop7}, $\lambda_\prim=c\cdot[\P^{\frac{n}{2}}]_\prim$ for some $\P^\frac{n}{2}\subseteq X_0$ and $c\in\Q^\times$. Therefore every component of $(\Sigma,0)$ is of the form $V_{[\P^\frac{n}{2}]}$ for some $\P^\frac{n}{2}\subseteq X_0$, i.e. 
$$
(\Sigma,0)= (\Sigma_{(1,\ldots,1)},0),
$$
where $\Sigma_{(1,\ldots,1)}$ corresponds to the locus of smooth hypersurfaces containing a linear subvariety of dimension $\frac{n}{2}$, which is a component of $\HL_{n,d}$ by Dan's theorem (Example \ref{dan}).
\end{proof}

\section{Proof of Theorem \ref{thm1.1}}
\label{sec4.2}
In this section we prove Theorem \ref{thm1.1}, which is at the core of the proof of Theorem \ref{thm1} (see \S \ref{sec4.1}). As explained in \S \ref{sec2} (Corollary \ref{igualjotas}) once we have a good enough description of the Artin Gorenstein ideal $J^{F,\lambda}$, we would expect to being able of recovering the cohomological class $\lambda_\prim$ from this ideal. This is exactly what we do. In fact, using the linear equations $L_1,\ldots,L_{\frac{n}{2}+1}\in J^{F,\lambda}_1$ we will compute an explicit expression for a polynomial $P_\lambda\in R^F$ such that
$$
\lambda_\prim=\res\left(\frac{P_\lambda\Omega}{F^{\frac{n}{2}+1}}\right)^{\frac{n}{2},\frac{n}{2}},
$$
see Proposition \ref{prop10}. Once we have computed this polynomial, we will be able to apply the machinery of \cite{VillaPCIAC} to compute the periods of $\lambda$ over all linear cycles of the Fermat variety in terms of the coefficients of $P_\lambda$ (which in turn is expressed in terms of the coefficients of $L_1,\ldots,L_{\frac{n}{2}+1}$). This will give us a family of constrains in the coefficients of each linear polynomial $L_i$, which will be enough to determine completely the coefficients of each $L_i$ (see Proposition \ref{prop11}) and conclude the result.

Let us recall the main tools from \cite{VillaPCIAC} we will use to compute the periods over algebraic cycles. The following proposition tells us how to compute the periods of $\lambda$ over any Hodge cycle $\mu$, once we know the associated polynomials $P_\lambda,P_\mu\in R^F$. Since both classes are rational, the resulting period must be a rational number.

\begin{prop}
\label{prop8}
Let $X=\{F=0\}\subseteq\P^{n+1}$ be a smooth degree $d$ hypersurface of even dimension $n$, and $\lambda,\mu\in H^{\frac{n}{2},\frac{n}{2}}(X,\Q)$ be two Hodge cycles. Let $P_\lambda,P_\mu\in\C[x_0,\ldots,x_{n+1}]_\sigma$ be two homogeneous polynomials of degree $\sigma:=(d-2)(\frac{n}{2}+1)$ such that $\lambda_\prim=\res\left(\frac{P_\lambda\Omega}{F^{\frac{n}{2}+1}}\right)^{\frac{n}{2},\frac{n}{2}}$ and $\mu_\prim=\res\left(\frac{P_\mu\Omega}{F^{\frac{n}{2}+1}}\right)^{\frac{n}{2},\frac{n}{2}}$. Then 
$$
P_\lambda\cdot P_\mu\equiv c\cdot det(Hess(F))\hspace{3mm}\text{(mod }J^F)
$$
for some $c\in \Q$.
\end{prop}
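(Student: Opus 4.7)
My first move is to observe that both $P_\lambda\cdot P_\mu$ and $\det\mathrm{Hess}(F)$ are homogeneous of degree $2\sigma=(d-2)(n+2)$. By Macaulay's theorem the Jacobian ring $R^F$ is Artinian Gorenstein of socle $2\sigma$, so the graded piece $R^F_{2\sigma}$ is one-dimensional. The classical Scheja--Storch / Euler--Jacobi calculation shows that $\det\mathrm{Hess}(F)$ is nonzero in $R^F$, hence generates $R^F_{2\sigma}$. Consequently the mere existence of some $c\in\C$ with $P_\lambda\cdot P_\mu\equiv c\cdot\det\mathrm{Hess}(F)\pmod{J^F}$ is automatic, and the entire content of the proposition is the rationality of $c$.

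To extract $c$ as a period I will invoke the Carlson--Griffiths cup product formula \cite{CarlsonGriffiths1980}: there exist an explicit constant $\alpha_{n,d}\in\Q^{\times}$ (a product of factorials, signs and powers of $d$ depending only on $n,d$) and a $\C$-linear trace $\tau:R^F_{2\sigma}\to\C$, normalized through the Grothendieck residue so that $\tau(\det\mathrm{Hess}(F))\in\Q^{\times}$, such that
\[
\int_X\res\!\left(\frac{P\,\Omega}{F^{\frac{n}{2}+1}}\right)\wedge\res\!\left(\frac{Q\,\Omega}{F^{\frac{n}{2}+1}}\right)=\alpha_{n,d}\cdot\tau(PQ)
\]
for every $P,Q\in\C[x_0,\ldots,x_{n+1}]_\sigma$. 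Specializing to $(P,Q)=(P_\lambda,P_\mu)$, the integral on the left is simplified by the observation that $F^{\frac{n}{2}+1}\cdot F^{\frac{n}{2}+1}=0$ in $H^n_\dR(X)$ by Hodge-type reasons ($p+q>n$), so only the $(n/2,n/2)$-components survive and the left-hand side collapses to $\int_X\lambda_\prim\wedge\mu_\prim$.

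The key arithmetic input comes in the last step: since $\lambda,\mu\in H^n(X,\Q)$ and the Lefschetz decomposition is defined over $\Q$, we can write $\lambda_\prim=\lambda-a\,h^{n/2}$ with $a=d^{-1}\int_X\lambda\wedge h^{n/2}\in\Q$ (and similarly for $\mu$, where $h$ denotes the hyperplane class), so $\int_X\lambda_\prim\wedge\mu_\prim\in\Q$. Combining the previous identities,
\[
c=\frac{\tau(P_\lambda P_\mu)}{\tau(\det\mathrm{Hess}(F))}=\frac{\alpha_{n,d}^{-1}\int_X\lambda_\prim\wedge\mu_\prim}{\tau(\det\mathrm{Hess}(F))}\in\Q,
\]
which is the claim. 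The only non-formal step, and the main technical obstacle, is to keep the rationality of the two constants $\alpha_{n,d}$ and $\tau(\det\mathrm{Hess}(F))$ consistent under one compatible normalization of $\tau$; this reduces to the explicit Scheja--Storch computation of the residue of $\det\mathrm{Hess}(F)$ against the regular sequence $\partial_0F,\ldots,\partial_{n+1}F$, which equals $(d-1)^{n+2}$ up to sign and is manifestly rational.
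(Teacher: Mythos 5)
Your argument is correct and follows essentially the same route as the paper: existence of $c$ is immediate from $\dim_\C R^F_{2\sigma}=1$ and the nonvanishing of the Hessian class, and rationality comes from equating $c$ (up to an explicit nonzero rational constant) with the intersection number $\int_X\lambda_\prim\wedge\mu_\prim\in\Q$ of rational primitive classes. The paper simply outsources the normalization issue you flag at the end to \cite[Proposition 6.1]{VillaPCIAC}, which packages the Carlson--Griffiths trace formula together with the Grothendieck-residue evaluation of $\det(\mathrm{Hess}(F))$ into the single identity used in its proof.
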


\begin{proof}
Since $\lambda$ and $\mu$ are rational classes, their primitive parts also are rational and so their intersection is a rational number which can be computed via periods as
\begin{equation}
\label{int}
\frac{1}{(2\pi\sqrt{-1})^n}\int_X \res\left(\frac{P_\lambda\Omega}{F^{\frac{n}{2}+1}}\right)\wedge \res\left(\frac{P_\mu\Omega}{F^{\frac{n}{2}+1}}\right)=\frac{-1}{(\frac{n}{2}!)^2}c\cdot (d-1)^{n+2}d.
\end{equation}
Equation \eqref{int} is proved in \cite[\S 6, Proposition 6.1]{VillaPCIAC}. Therefore $c\in\Q$.
\end{proof}

The following proposition taken from \cite{VillaPCIAC} computes the associated polynomial to every linear cycle of the Fermat variety.

\begin{prop}{\normalfont{(\cite[Corollary 8.3]{VillaPCIAC})}}
\label{prop9}
Let
$
X=\{x_0^d+\cdots+x_{n+1}^d=0\}
$ 
be the Fermat variety. For $\alpha_0,\alpha_2,\ldots,\alpha_n\in \{1,3,\ldots,2d-1\}$ consider 
\begin{equation}
\label{lincycle}
\P_{\alpha}^{\frac{n}{2}}:=\{x_{0}-\zeta_{2d}^{\alpha_0}x_{1}=\cdots=x_{n}-\zeta_{2d}^{\alpha_n}x_{{n+1}}=0\},    
\end{equation}
and $\delta:=[\P_{\alpha}^\frac{n}{2}]$. Its associated polynomial is 
\begin{equation}
\label{plinfermat}    
P_{\delta}=c_\delta\cdot\zeta_{2d}^{\alpha_0+\cdots+\alpha_n}\prod_{j=1}^{\frac{n}{2}+1}\left(\frac{x_{{2j-2}}^{d-1}-(\zeta_{2d}^{\alpha_{2j-2}}x_{{2j-1}})^{d-1}}{x_{{2j-2}}-\zeta_{2d}^{\alpha_{2j-2}}x_{{2j-1}}}\right),
\end{equation}
for some $c_\delta\in \Q^\times$.
\end{prop}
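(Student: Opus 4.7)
The plan is to reduce the computation to the general formula for the associated polynomial of a complete intersection algebraic cycle given by \cite[Theorem 1.1]{VillaPCIAC}, and then exploit the special factorization of the Fermat polynomial into pairs of variables.

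First, I would recognize that $\P_\alpha^{n/2}$ is a complete intersection cut out by the $\frac{n}{2}+1$ linear forms
$$L_j := x_{2j-2} - \zeta_{2d}^{\alpha_{2j-2}} x_{2j-1}, \qquad j=1,\ldots,\tfrac{n}{2}+1.$$
The cited general theorem expresses $P_Z$ for a complete intersection $Z = \{f_1 = \cdots = f_{n/2+1} = 0\}$ contained in $X = \{F=0\}$ in terms of the $f_i$ and of cofactors $g_i$ in any decomposition $F = f_1 g_1 + \cdots + f_{n/2+1} g_{n/2+1}$. So my next step is to compute such a decomposition explicitly for the Fermat $F$ and the linear forms $L_j$.

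The Fermat polynomial splits as $F = \sum_{j=1}^{n/2+1}(x_{2j-2}^d + x_{2j-1}^d)$, and each pair of variables $(x_{2j-2}, x_{2j-1})$ is acted on by only one of the $L_j$. Since each $\alpha_{2j-2}$ is odd, we have $(\zeta_{2d}^{\alpha_{2j-2}})^d = -1$, so
$$x_{2j-2}^d + x_{2j-1}^d = x_{2j-2}^d - (\zeta_{2d}^{\alpha_{2j-2}} x_{2j-1})^d = L_j \cdot g_j, \quad g_j := \frac{x_{2j-2}^d - (\zeta_{2d}^{\alpha_{2j-2}} x_{2j-1})^d}{x_{2j-2} - \zeta_{2d}^{\alpha_{2j-2}} x_{2j-1}}.$$
Thus $F = \sum_j L_j g_j$ with the cofactors $g_j$ involving disjoint pairs of variables. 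Plugging this into the general formula from \cite[Theorem 1.1]{VillaPCIAC}, any matrix/determinant that appears block-diagonalizes along the pair decomposition, and the expression factors as a product over $j$. A Griffiths-type degree reduction inside each pair $(x_{2j-2}, x_{2j-1})$ lowers the contribution from $g_j$ of degree $d-1$ to a polynomial of degree $d-2$, producing exactly the factor
$$\frac{x_{2j-2}^{d-1} - (\zeta_{2d}^{\alpha_{2j-2}} x_{2j-1})^{d-1}}{x_{2j-2} - \zeta_{2d}^{\alpha_{2j-2}} x_{2j-1}}.$$
The phase $\zeta_{2d}^{\alpha_0+\cdots+\alpha_n}$ collects from the scalar normalizations in each pair (essentially $\zeta_{2d}^{\alpha_{2j-2}}$ appears once per block because of the difference of $d$-th and $(d-1)$-th power identities used in the reduction).

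The main obstacle is step two: matching the generic determinantal/residue formula of \cite[Theorem 1.1]{VillaPCIAC} to the explicit product form, and then carefully tracking all scalar constants (the factor $d$, the $(2\pi i)^{n/2}$ from the residue map, orientations of the cycle class, and the sign conventions in $\Omega$). Rationality of the final constant $c_\delta \in \Q^\times$ is guaranteed, however, by the integrality of $[\P_\alpha^{n/2}]$ together with Proposition \ref{prop8} (applied, for example, to the self-intersection $\langle \delta_{\text{prim}}, \delta_{\text{prim}}\rangle$), so once one verifies that the product on the right-hand side indeed represents a nonzero multiple of $\delta_{\text{prim}}$ under the residue isomorphism, the conclusion $c_\delta \in \Q^\times$ is automatic. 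Nonvanishing of $c_\delta$ follows because $[\P_\alpha^{n/2}]_{\text{prim}}$ is nonzero in $H^{n/2,n/2}(X)_{\text{prim}}$ (for instance because its self-intersection with a transverse linear cycle is nonzero).
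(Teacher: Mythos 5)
The paper itself gives no argument for this proposition: it is quoted verbatim from \cite[Corollary 8.3]{VillaPCIAC}, and that corollary is derived there exactly along the route you propose, namely by feeding the decomposition $F=\sum_j L_jg_j$, with $L_j=x_{2j-2}-\zeta_{2d}^{\alpha_{2j-2}}x_{2j-1}$ and $g_j=(x_{2j-2}^d-(\zeta_{2d}^{\alpha_{2j-2}}x_{2j-1})^d)/L_j$, into the general complete-intersection formula of \cite[Theorem 1.1]{VillaPCIAC}. So your overall plan is the intended one. Two points in your sketch deserve correction. First, no ``Griffiths-type degree reduction'' is needed: the general theorem already exhibits $P_Z$ as an explicit nonzero rational multiple of the Jacobian determinant of the $n+2$ polynomials $(f_1,g_1,\ldots,f_{\frac{n}{2}+1},g_{\frac{n}{2}+1})$, which for your choice is block diagonal in the pairs $(x_{2j-2},x_{2j-1})$; differentiating the identity $x_{2j-2}^d-(\zeta_{2d}^{\alpha_{2j-2}}x_{2j-1})^d=L_jg_j$ shows that the $j$-th $2\times 2$ block determinant equals $d\,\zeta_{2d}^{\alpha_{2j-2}}\bigl(x_{2j-2}^{d-1}-(\zeta_{2d}^{\alpha_{2j-2}}x_{2j-1})^{d-1}\bigr)/L_j$, so the product, the phase $\zeta_{2d}^{\alpha_0+\cdots+\alpha_n}$, and the constant $c_\delta$ (the rational constant of Theorem 1.1 times $d^{\frac{n}{2}+1}$) all come out in one stroke. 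Second, your fallback argument for rationality is not sufficient as stated: applying Proposition \ref{prop9}'s companion Proposition \ref{prop8} to the self-intersection of $\delta$ and extracting the coefficient of $(x_0\cdots x_{n+1})^{d-2}$ from the square of the product (which is $(d-1)^{\frac{n}{2}+1}\zeta_{2d}^{(d-2)(\alpha_0+\cdots+\alpha_n)}$ modulo $J^F$) only yields $c_\delta^2\in\Q$, i.e.\ it determines $c_\delta$ only up to a square root, and pairing with other linear cycles is circular unless one of their constants is already known. Rationality of $c_\delta$ should therefore be taken from the explicit constant in \cite[Theorem 1.1]{VillaPCIAC} itself, which is what the cited corollary does; with that substitution your argument is complete.
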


The next step is to find an expression for the polynomial $P_\lambda$ in terms of the equations of $L_1,\ldots,L_{\frac{n}{2}+1}\in J^{F,\lambda}_1$. 

\begin{prop}
\label{prop10}
Let $F=x_0^d+\cdots+x_{n+1}^d$ and $X=\{F=0\}$ be the Fermat variety of even dimension $n$ and degree $d\ge 2+\frac{6}{n}$. Let $\lambda\in H^{\frac{n}{2},\frac{n}{2}}(X,\Z)$ be a non-trivial Hodge cycle such that there exist $L_1,\ldots,L_{\frac{n}{2}+1}\in J^{F,\lambda}_1$ linearly independent. Then there exist $c_\lambda\in\C^\times$ and $a_0,a_2,a_4,\ldots,a_n\in\C$ such that up to a permutation of the coordinates
\begin{equation}
\label{plambda}    
P_\lambda=c_\lambda\prod_{j=1}^{\frac{n}{2}+1}\left(\frac{x_{{2j-2}}^{d-1}-(a_{2j-2}x_{{2j-1}})^{d-1}}{x_{{2j-2}}-a_{2j-2}x_{{2j-1}}}\right).
\end{equation}
\end{prop}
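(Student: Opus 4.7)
The plan is to induct on the even dimension $n$, peeling off a two-variable factor of $P_\lambda$ corresponding to one binomial linear form, and then applying the inductive hypothesis to the remaining factor in the Jacobian ring of a Fermat variety of dimension $n-2$.

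For the setup, I would use Gaussian elimination among the $L_i$'s together with a permutation of the variables $x_0, \ldots, x_{n+1}$ to arrange that the RREF pivot positions are $0, 2, \ldots, n$. Since permutations preserve $F$, $J^F$, and $J^{F,\lambda}$, and since the $\frac{n}{2}+1$ linearly independent $L_i$'s have a submatrix of full rank in some set $S$ of size $\frac{n}{2}+1$, after this reduction I may assume
\[
L_i = x_{2i-2} + \sum_{l=i}^{\frac{n}{2}+1} c_{il}\, x_{2l-1}, \qquad i = 1, \ldots, \tfrac{n}{2}+1,
\]
and in particular $L_{\frac{n}{2}+1} = x_n - a_n x_{n+1}$ is a binomial in just two variables.

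The core computation uses the tensor decomposition $R^F \cong \bigotimes_{j=0}^{n+1} \C[x_j]/(x_j^{d-1})$. Writing $P_\lambda \equiv \sum_{i,j=0}^{d-2} x_n^i x_{n+1}^j P_{ij} \pmod{J^F}$ with $P_{ij} \in \C[x_0, \ldots, x_{n-1}]$, the identity $L_{\frac{n}{2}+1} P_\lambda \equiv 0 \pmod{J^F}$ translates, coefficient by coefficient, into the boundary vanishings $P_{i,0} = P_{0,i} = 0$ for $0 \leq i \leq d-3$ together with the recurrences $P_{i-1,j} = a_n P_{i,j-1}$ for $1 \leq i, j \leq d-2$. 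Solving this linear system yields
\[
P_\lambda \equiv Q_{\frac{n}{2}+1}(x_n, x_{n+1}) \cdot R \pmod{J^F}, \qquad Q_{\frac{n}{2}+1} = \frac{x_n^{d-1} - (a_n x_{n+1})^{d-1}}{x_n - a_n x_{n+1}},
\]
for some $R \in R^F$ of degree $(d-2)\tfrac{n}{2}$.

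Next, for each $i \leq \tfrac{n}{2}$, I would split $L_i = L_i' + c_{i, \frac{n}{2}+1} x_{n+1}$ with $L_i' \in \C[x_0, \ldots, x_{n-1}]_1$, and analyze the vanishing $L_i P_\lambda \equiv 0 \pmod{J^F}$ by separating according to the $(x_n, x_{n+1})$-bigrading on $R^F$. A short explicit computation shows $x_{n+1} Q_{\frac{n}{2}+1} \neq 0$ in $\C[x_n, x_{n+1}]/(x_n^{d-1}, x_{n+1}^{d-1})$, and because the two pieces of $L_i P_\lambda$ live in distinct $(x_n, x_{n+1})$-bidegrees, $R \neq 0$ forces $c_{i, \frac{n}{2}+1} = 0$. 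The residual condition is $L_i' R = 0$ in the Jacobian ring $R^{F'}$ of the smaller Fermat $F' = x_0^d + \cdots + x_{n-1}^d$. Since $L_1', \ldots, L_{n/2}'$ remain linearly independent in $R^{F'}_1$ and $R$ has exactly the degree $(d-2)\frac{n}{2}$ of a ``$P$-polynomial'' for the Fermat of even dimension $n-2$, the inductive hypothesis factors $R = c' \prod_{j=1}^{n/2} Q_j(x_{2j-2}, x_{2j-1})$ for constants $a_0, a_2, \ldots, a_{n-2} \in \C$ (after a further permutation of $\{0, \ldots, n-1\}$); multiplying by $Q_{\frac{n}{2}+1}(x_n, x_{n+1})$ with $c_\lambda := c'$ gives the desired product form, and the base case $n=0$ is the one-pair recurrence already carried out in the second paragraph. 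The main technical point is the bidegree argument: it is what forces each remaining $L_i$ to descend honestly to a linear form on the smaller Fermat variety, thereby enabling the inductive descent.
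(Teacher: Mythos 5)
The decisive gap is in your very first normalization step. Gaussian elimination plus a permutation of coordinates only yields the reduced echelon form $L_i=x_{2i-2}+l_i(x_1,x_3,\ldots,x_{n+1})$, where each $l_i$ is an arbitrary linear form in \emph{all} the odd variables: row reduction clears the pivot (even) columns from the other rows, but imposes no triangularity whatsoever on the non-pivot columns, so there is no reason for $L_{\frac{n}{2}+1}$ to involve only $x_n$ and $x_{n+1}$. Moreover, no purely linear-algebraic argument can produce such a binomial: a general subspace $W\subseteq\C[x_0,\ldots,x_{n+1}]_1$ of dimension $\frac{n}{2}+1$ contains no form supported on just two variables (already for $n=2$, $W=\langle x_0+x_1+x_3,\ x_2+x_1-x_3\rangle$ has none, since a general element has coefficients $\alpha,\beta,\alpha+\beta,\alpha-\beta$ and any two of these vanishing forces $\alpha=\beta=0$), so no choice of variable ordering or echelon reduction can make the last row a binomial. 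Of course $J^{F,\lambda}_1$ cannot be such a subspace -- but that is exactly what has to be proved, and it is the nontrivial content of this stage. The paper extracts it from the ideal structure: a socle comparison gives $J^{F,\lambda}=\langle L_1,\ldots,L_{\frac{n}{2}+1},x_1^{d-1},\ldots,x_{n+1}^{d-1}\rangle$, one checks these generators are a Gr\"obner basis for a suitable lexicographic order, and then the membership $(L_i-x_{2i-2})^{d-1}\in J^{F,\lambda}$, reduced by division against this basis, forces each $l_i$ to be a multiple of a single odd variable. Without an argument of this kind your induction never gets started.

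Even granting a binomial $L_{\frac{n}{2}+1}=x_n-a_nx_{n+1}$, two further steps are unproved. The single relation $L_{\frac{n}{2}+1}P_\lambda\equiv 0\ (\mathrm{mod}\ J^F)$ does \emph{not} give $P_\lambda\equiv Q_{\frac{n}{2}+1}\cdot R$ with $R\in\C[x_0,\ldots,x_{n-1}]$: your recurrences constrain only the bidegrees $(i,j)$ with both indices at most $d-2$ and leave the anti-diagonals $i+j>d-2$ unconstrained. For instance $x_n^{d-2}x_{n+1}^{d-2}S$ is annihilated by $L_{\frac{n}{2}+1}$ modulo $J^F$ for any $S$ in the remaining variables, as is $x_nx_{n+1}\bigl(x_n^{d-3}+a_nx_n^{d-4}x_{n+1}+\cdots+a_n^{d-3}x_{n+1}^{d-3}\bigr)S$, so the solution space of your linear system is strictly larger than the multiples of $Q_{\frac{n}{2}+1}$; killing these extra terms requires using all the $L_i$ simultaneously, or the Gorenstein duality observation that the annihilator of $J^{F,\lambda}$ in $R^F$ is one-dimensional in degree $\sigma$ and hence spanned by the explicit product (essentially the paper's route). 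Your boundary conditions $P_{0,i}=0$ also presuppose $a_n\neq 0$; when $a_n=0$ one only gets $P_\lambda=x_n^{d-2}R$ with $R$ possibly involving $x_{n+1}$, and the descent must be handled separately. Finally, the inductive hypothesis cannot be invoked in the form stated, since $R$ is not known to be the polynomial of a Hodge cycle on the lower-dimensional Fermat: you would need to recast the claim as a purely algebraic statement about a homogeneous $P\notin J^F$ of degree $(d-2)(\frac{n}{2}+1)$ with $\dim_\C (J^F:P)_1\ge\frac{n}{2}+1$ (harmless, since rationality is never used here, but necessary), with the base case being the one-pair computation rather than $n=0$, where the hypothesis $d\ge 2+\frac{6}{n}$ is vacuous.
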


\begin{proof}
Since $F=x_0^d+\cdots+x_{n+1}^d$, then $J^F=\langle x_0^{d-1},\ldots,x_{n+1}^{d-1}\rangle\subseteq J^{F,\lambda}$. Without loss of generality we can assume that the set $L_1,\ldots,L_{\frac{n}{2}+1}$ is reduced with respect to the usual lexicographic order, and so modulo a permutation of the variables we have that 
$$
L_i=x_{2i-2}+l_i(x_1,x_3,\ldots,x_{n+1}),
$$
for every $i=1,\ldots,\frac{n}{2}+1$ (note that since we are in Fermat we can permute the variables). In consequence
$$
I:=\langle L_1,\ldots,L_{\frac{n}{2}+1},x_1^{d-1},x_3^{d-1},\ldots,x_{n+1}^{d-1}\rangle\subseteq J^{F,\lambda}.
$$
By Macaulay's theorem, $R:=\C[x_0,\ldots,x_{n+1}]/I$ is Artinian Gorenstein of socle 
$$
\soc(R)=(d-2)(\frac{n}{2}+1)=\sigma=\soc(R^{F,\lambda}).
$$
Then 
$$
J^{F,\lambda}=\langle L_1,\ldots,L_{\frac{n}{2}+1},x_1^{d-1},x_3^{d-1},\ldots,x_{n+1}^{d-1}\rangle.
$$
Considering the lexicographic order generated by the following order in the variables $x_0>x_2>\cdots>x_n>x_1>x_3>\cdots>x_{n+1}$, it follows that as $\C$-vector spaces
$$
J^{F,\lambda}_e\simeq \langle x_0,x_2,\ldots,x_n,x_1^{d-1},x_3^{d-1},\ldots,x_{n+1}^{d-1}\rangle_e \hspace{5mm}\forall e\ge 0,
$$
then by Proposition \ref{hilb} it follows that $L_1,\ldots,L_{\frac{n}{2}+1},x_1^{d-1},\ldots,x_{n+1}^{d-1}$ are a Gr\"obner basis of $J^{F,\lambda}$.
By expanding the following binomial we see that
$$
(L_i-x_{2i-2})^{d-1}\in J^{F,\lambda},
$$
but $L_i-x_{2i-2}=l_i$ depends only on the odd variables $x_1,x_3,\ldots,x_{n+1}$. Therefore, dividing $(L_i-x_{2i-2})^{d-1}$ by the Gr\"obner basis (see Proposition \ref{grob}) we conclude that 
$$
(L_i-x_{2i-2})^{d-1}=a_{i,1}x_1^{d-1}+a_{i,3}x_3^{d-1}+\cdots+a_{i,n+1}x_{n+1}^{d-1},
$$
for some $a_{i,j}\in\C$. The only possibility of this to happen is that $$L_i=x_{2i-2}-a_{2i-2}x_{p_{2i-2}},$$ for some $p_{2i-2}\in \{1,3,\ldots,n+1\}$ and $a_{2i-2}\in\C$. We claim that we can choose these numbers in such a way that the map
$$
2i-2\in\{0,2,\ldots,n\}\mapsto p_{2i-2}\in\{1,3,\ldots,n+1\}
$$
be a bijection. In fact, we have to show that for every $i\neq j$ such that $a_{2i-2},a_{2j-2}\in\C^\times$, then $p_{2i-2}\neq p_{2j-2}$. In order to do this we will compute the associated polynomial 
$$
P_\lambda\in \C[x_0,\ldots,x_{n+1}]_\sigma\setminus J^F_\sigma.
$$
We can assume that 
\begin{equation}
\label{pdelta}
P_\lambda=\sum_{i_0,\ldots,i_{n+1}=0}^{d-2}c_{(i_0,\ldots,i_{n+1})}x_0^{i_0}\cdots x_{n+1}^{i_{n+1}}.
\end{equation}
By Proposition \ref{midef} we have that $L_iP_\lambda\in J^F=\langle x_0^{d-1},\ldots,x_{n+1}^{d-1}\rangle$. Rewriting \eqref{pdelta} as 
$$
P_\lambda=\sum_{j,k=0}^{d-2}q_{j,k}x_{2i-2}^jx_{p_{2i-2}}^k,
$$
for $q_{j,k}$ not depending on $x_{2i-2}$ and $x_{p_{2i-2}}$ for every $j,k\in\{0,\ldots,d-2\}$. We obtain that necessarily $q_{j,k}=a_{2i-2}q_{j+1,k-1}=\cdots=a_{2i-2}^kq_{j+k,0}$ and so $q_{j,k}=0$ for $j+k>d-2$. On the other hand if $l<d-2$ we see that $q_{l,0}=0$ (otherwise $L_iP_\lambda$ contains the non-zero monomial $q_{l,0}x_{2i-2}^{l+1}\notin J^F$). Then
$$
P_\lambda=\sum_{j+k=d-2}q_{d-2,0}x_{2i-2}^j(a_{2i-2}x_{p_{2i-2}})^k=q_{d-2,0}\cdot S_i.
$$
Since $S_i=\prod_{j=1}^{d-2}(x_{2i-2}-\zeta_{d-1}^ja_{2i-2}x_{p_{2i-2}})$ for some $(d-1)$-th primitive root of unity $\zeta_{d-1}$, the $S_i$ are two by two coprimes for $i=1,\ldots,\frac{n}{2}+1$. Therefore, letting $c_\lambda:=q_{d-2,0}$ we have that
$$
P_\lambda=c_\lambda\prod_{i=1}^{\frac{n}{2}+1}\left(\frac{x_{2i-2}^{d-1}-(a_{2i-2}x_{p_{2i-2}})^{d-1}}{L_i}\right).
$$
In consequence $c_\lambda\neq 0$ and if $a_{2i-2},a_{2j-2}\in\C^\times$ then $p_{2i-2}\neq p_{2j-2}$ as claimed (otherwise we would contradict \eqref{pdelta}). In conclusion, we have shown that up to a permutation of the coordinates $P_\lambda$ is given by \eqref{plambda}.
\end{proof}

\begin{rmk}
\label{redad}
It follows that up to a permutation of the coordinates  $$J^{F,\lambda}=(J^F:P_\lambda)=\langle x_{0}-a_0x_{1},x_2-a_2x_3,\ldots,x_{n}-a_nx_{{n+1}},x_{1}^{d-1},x_{3}^{d-1},\ldots,x_{{n+1}}^{d-1}\rangle$$
and so $\{L_1=\cdots=L_{\frac{n}{2}+1}=0\}=\{x_{0}-a_0x_{1}=\cdots=x_{n}-a_nx_{{n+1}}=0\}.$
Therefore, in order to finish the proof of Theorem \ref{thm1.1} we have to show that
\begin{center}
$a_i^d+1=0$ for all $i=0,2,\ldots,n$.
\end{center}
This will follow from a collection of arithmetic conditions which are summarized in the following elementary proposition.
\end{rmk}

\begin{prop}
\label{prop11}
Consider $d$ such that $\zeta_d+\zeta_d^{-1}\notin \Q$ (i.e. $d\neq 1,2,3,4,6$). Suppose $a\in \C$ satisfies that
\begin{equation}
\label{arthcond1}    
\frac{(a^{d-1}+x)(ay-1)}{(a^{d-1}+y)(ax-1)}\in \Q
\end{equation}
for all $x,y\in \{\zeta_{2d}, \zeta_{2d}^3,\zeta_{2d}^5,\ldots,\zeta_{2d}^{2d-1}\}$ such that \eqref{arthcond1} is well defined. Then $a^d+1=0$.
\end{prop}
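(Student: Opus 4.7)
The plan is to reformulate the condition as the rationality of the Möbius ratio $g(x)/g(y)$, where $g(x) := (x + a^{d-1})/(ax-1)$; the hypothesis becomes $g(x)/g(y) \in \mathbb{Q}$ for all valid $x, y \in R_d := \{z \in \mathbb{C} : z^d = -1\}$. I first dispatch the trivial cases: $a = 0$ gives $g(x) = -x$, and $x/y \in \mathbb{Q}$ cannot hold for all distinct $x, y \in R_d$ when $d \ge 3$; and $a^d = -1$ is exactly the desired conclusion (indeed $g$ is then constant). So assume $a \neq 0$ and $a^d + 1 \neq 0$, aiming for a contradiction.

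A geometric argument reduces to $|a| = 1$: since $g$ is a non-constant Möbius transformation sending the $d \ge 5$ points of $R_d$ (lying on the unit circle) into a single line $\mathbb{Q} \cdot c \subset \mathbb{R} \cdot c$, and any two distinct circles or lines in $\hat{\mathbb{C}}$ meet in at most two points, $g$ must map the unit circle onto $\mathbb{R}\cdot c$, which forces the pole $1/a$ of $g$ to lie on the unit circle. Writing $a = e^{i\phi}$, a direct computation of $g$ on the unit circle shows that $g(e^{i(2k+1)\pi/d})$ equals a common $k$-independent complex prefactor times the real quantity $F_k := \cos\tau \, \cot\beta_k + \sin\tau$, where $\beta_k := (2k+1)\pi/(2d) + \phi/2$ and $\tau := d\phi/2$. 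Both $\cos\tau = 0$ and $\sin\beta_k = 0$ (for any $k$) correspond to $a^d = -1$ and are therefore excluded, so dividing the hypothesis $F_k/F_j \in \mathbb{Q}$ by $\cos\tau$ gives $(C_k + S)/(C_j + S) \in \mathbb{Q}$ with $C_k := \cot\beta_k$ and $S := \tan\tau$; taking differences yields $(C_k - C_0)/(C_j - C_0) \in \mathbb{Q}$.

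The main arithmetic step uses the hypothesis $\zeta_d + \zeta_d^{-1} \notin \mathbb{Q}$, equivalent via $\cos^2(\pi/d) = (1 + \cos(2\pi/d))/2$ to $\cos^2(\pi/d) \notin \mathbb{Q}$. From $C_k - C_0 = -\sin(k\pi/d)/(\sin\beta_k \sin\beta_0)$, the rationality at $(j, k) = (2, 1)$ reads $\sin\beta_2/\sin\beta_1 = 2\alpha \cos(\pi/d)$ for some $\alpha \in \mathbb{Q}$, equivalently (via $\sin\beta_2 = \sin(\beta_1 + \pi/d)$) $\cot\beta_1 = (2\alpha - 1)\cot(\pi/d)$. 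Likewise, the rationality at $(j, k) = (3, 1)$ gives $\sin\beta_3/\sin\beta_1 = \gamma(4\cos^2(\pi/d) - 1)$ for some $\gamma \in \mathbb{Q}$; expanding $\sin\beta_3 = \sin(\beta_1 + 2\pi/d)$ and substituting the formula for $\cot\beta_1$ yields the decisive identity
$$(4\alpha - 4\gamma)\cos^2(\pi/d) = 1 - \gamma.$$
Since $\cos^2(\pi/d)$ is irrational by hypothesis, both sides must vanish, forcing $\alpha = \gamma = 1$. Hence $\cot\beta_1 = \cot(\pi/d)$, so $\beta_1 \equiv \pi/d \pmod{\pi}$, which unwinds to $\phi \equiv -\pi/d \pmod{2\pi}$ and gives $a^d = e^{-i\pi} = -1$, contradicting the assumption.

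The main obstacle is precisely the third paragraph's arithmetic: one must chain together the two rationality constraints coming from indices $(2,1)$ and $(3,1)$ and manipulate them algebraically to isolate a single equation linear in $\cos^2(\pi/d)$ with rational coefficients, whereupon the irrationality hypothesis produces the decisive collapse $\alpha = \gamma = 1$ and thus the contradiction $a^d = -1$.
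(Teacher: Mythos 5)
Your proof is correct, but it takes a genuinely different route from the paper's. The paper never reduces to $|a|=1$: it subtracts $1$ from \eqref{arthcond1} to get $\frac{(a^d+1)(y-x)}{(a^{d-1}+y)(ax-1)}\in\Q^\times$, notes that suitable products and quotients of these quantities reproduce the cross-ratio of any four admissible roots of $z^d=-1$, and then computes that for four consecutive roots $\zeta_{2d}^{i+1},\zeta_{2d}^{i+3},\zeta_{2d}^{i+5},\zeta_{2d}^{i+7}$ the (inverse) cross-ratio equals $-1-(\zeta_d+\zeta_d^{-1})$, contradicting the hypothesis in a few lines. You exploit the same M\"obius structure but geometrically: the circle-to-circle-or-line property forces the pole $1/a$ onto the unit circle, hence $|a|=1$, and then a trigonometric computation converts the two rationality constraints at indices $(2,1)$ and $(3,1)$ into the relation $(4\alpha-4\gamma)\cos^2(\pi/d)=1-\gamma$, where irrationality of $\cos^2(\pi/d)$ (equivalent to $\zeta_d+\zeta_d^{-1}\notin\Q$) forces $\alpha=\gamma=1$ and hence $a^d=-1$, a contradiction; I verified the formula for $g$ on the unit circle, the $k$-independent prefactor, the identities for $\cot\beta_1$, and the final unwinding, and they all check out. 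The paper's cross-ratio argument is shorter, uniform in $a$ (no separate case $a=0$, no reduction to $|a|=1$), and purely algebraic; yours is more computational but makes the geometry explicit and isolates exactly where the arithmetic hypothesis enters. One point you gloss over, which the paper handles by allowing an arbitrary shift $i$ of the four chosen roots: the point $-a^{d-1}$ may itself lie in $R_d$, i.e. your $F_0$ could vanish, making the ratios with denominator $F_0$ ill-defined; you must then replace the base index $0$ by some $j$ with $F_j\neq 0$. Since at most one index is bad, $d\ge 5$, and $\cot\beta_{k+d}=\cot\beta_k$, four cyclically consecutive good indices always exist and your algebra goes through verbatim, so this is only a small patch, not a flaw in the method.
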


\begin{proof}
%Note first that $a\neq 0$, otherwise taking $x=\zeta_{2d}^3$ and $y=\zeta_{2d}$ we would have $\zeta_d\in\Q$ which is absurd. 
Assume by contradiction that $a^d+1\neq 0$, i.e. that $a\notin\{\zeta_{2d}, \zeta_{2d}^3,\zeta_{2d}^5,\ldots,\zeta_{2d}^{2d-1}\}$. Therefore \eqref{arthcond1} will be well defined (and non-zero) for all pairs $x,y\in \{\zeta_{2d}, \zeta_{2d}^3,\zeta_{2d}^5,\ldots,\zeta_{2d}^{2d-1}\}\setminus\{-a^{d-1}\}$. For such a pair we also have
$$
\frac{(a^{d-1}+x)(ay-1)}{(a^{d-1}+y)(ax-1)}-1=\frac{(a^{d}+1)(y-x)}{(a^{d-1}+y)(ax-1)}\in\Q^\times.
$$
Since $d\ge 5$ we can take $x_1,x_2,x_3,x_4\in \{\zeta_{2d}, \zeta_{2d}^3,\zeta_{2d}^5,\ldots,\zeta_{2d}^{2d-1}\}\setminus\{-a^{d-1}\}$ all different, then
$$
\varphi(x_1,x_2,x_3,x_4):=\frac{(x_1-x_2)(x_3-x_4)}{(x_1-x_4)(x_3-x_2)}=\frac{\frac{(a^{d}+1)(x_1-x_2)}{(a^{d-1}+x_1)(ax_2-1)}}{\frac{(a^{d}+1)(x_1-x_4)}{(a^{d-1}+x_1)(ax_4-1)}}\cdot\frac{\frac{(a^{d}+1)(x_3-x_4)}{(a^{d-1}+x_3)(ax_4-1)}}{\frac{(a^{d}+1)(x_3-x_2)}{(a^{d-1}+x_3)(ax_2-1)}}\in\Q^\times.
$$
There exists some $i$ such that we can choose them of the form $x_1=\zeta_{2d}^{i+1}$, $x_2=\zeta_{2d}^{i+3}$, $x_3=\zeta_{2d}^{i+5}$, $x_4=\zeta_{2d}^{i+7}$ and so
$$
\varphi(x_1,x_2,x_3,x_4)^{-1}=\frac{1+\zeta_{2d}^2+\zeta_{2d}^4}{-\zeta_{2d}^2}=-1-(\zeta_d+\zeta_d^{-1})\in \Q.
$$
Contradicting our hypothesis on $d$. 

\end{proof}

\begin{proof}\textbf{of Theorem \ref{thm1.1}} After Proposition \ref{prop10} we can assume that $P_\lambda$ has the expression \eqref{plambda}. Thus by Remark \ref{redad}, the proof of Theorem \ref{thm1.1} is reduced to show that $a_i^d+1=0$ for all $i=0,2,4,\ldots,n$.
By Proposition \ref{prop8} and Proposition \ref{prop9}, for every collection $\alpha_0,\alpha_2,\ldots,\alpha_n\in\{1,3,5,\ldots,2d-1\}$ and $\delta:=[\P^\frac{n}{2}_\alpha]$ we have
$$
P_\lambda\cdot P_\delta\equiv c_\alpha\cdot (x_0\cdots x_{n+1})^{d-2} \hspace{3mm}\text{(mod }\langle x_0^{d-1},\ldots,x_{n+1}^{d-1}\rangle)
$$
for some $c_\alpha\in\Q$. Using the expressions \eqref{plambda} and \eqref{plinfermat} we can compute 
$$
c_\alpha=c_\lambda  c_\delta\cdot \zeta_{2d}^{\alpha_0+\cdots+\alpha_n}\prod_{j=1}^{\frac{n}{2}+1}\left(\frac{a_{{2j-2}}^{d-1}-(\zeta_{2d}^{\alpha_{2j-2}})^{d-1}}{a_{{2j-2}}-\zeta_{2d}^{\alpha_{2j-2}}}\right)\in \Q.
$$
For every $i\in\{0,2,\ldots,n\}$ fix the values of $\alpha_j$ for $j\in\{0,2,\ldots,n\}\setminus\{i\}$ in such a way that
$$
\frac{a_j^{d-1}-(\zeta_{2d}^{\alpha_j})^{d-1}}{a_j-\zeta_{2d}^{\alpha_j}}\neq 0.
$$
Now taking $\beta_i:=2d-r$, $\beta'_i:=2d-s$, $\beta_j:=\alpha_j=:\beta'_j$ for $j\neq i$, it follows that 
$$
\frac{c_\beta}{c_{\beta'}}=\frac{(a_i^{d-1}+\zeta_{2d}^r)(a_i\zeta_{2d}^s-1)}{(a_i^{d-1}+\zeta_{2d}^s)(a_i\zeta_{2d}^r-1)}\in\Q
$$
for all $r,s\in\{1,3,\ldots,2d-1\}$ where $c_{\beta'}\neq 0$, and the result follows from Proposition \ref{prop11}.
\end{proof}

\section{Special cohomology classes of the Fermat variety of degree 3, 4 and 6}
\label{sec5.5}
In this section we want to explain why our proof of Theorem \ref{thm1.1} is not enough for covering the cases $d=3,4,6$. In fact, in each case we can find a $(\frac{n}{2}+1)$-parameter family (over $\Q$) of $(\frac{n}{2},\frac{n}{2})$ cohomology classes of the Fermat variety $X=\{x_0^d+\cdots+x_{n+1}^d=0\}$, all different in primitive cohomology, satisfying \eqref{igualdad1} and intersecting every linear cycle in a rational number. Each family comes with a natural faithful $(\mathbb{S}^1_{\Q(\zeta_d)})^{\frac{n}{2}+1}$ action, where $$\mathbb{S}^1_{\Q(\zeta_d)}:=\{z\in\Q(\zeta_d)^\times: |z|=1\}<\C^\times.$$ 

The reason behind the existence of these families is the failure of Proposition \ref{prop11}. In fact, for every $d=3,4,6$ the following sets $G_d$ are constituted by elements satisfying \eqref{arthcond1} as can be easily verified
$$
G_3:=\mathbb{S}^1_{\Q(\zeta_3)},\hspace{4mm}    
G_4:=\zeta_8\cdot \mathbb{S}^1_{\Q(i)},\hspace{4mm}    
G_6:=i\cdot \mathbb{S}^1_{\Q(\zeta_3)}. 
$$
For every $a=(a_0,a_2,\ldots,a_n)\in G_d^{\frac{n}{2}+1}$ define 
\begin{equation}
\label{lambdaa}    
\lambda_a:=\res\left(\frac{P_a\Omega}{F^{\frac{n}{2}+1}}\right)^{\frac{n}{2},\frac{n}{2}}\in H^{\frac{n}{2},\frac{n}{2}}(X)_\prim
\end{equation}
given by the polynomial  
\begin{equation}
\label{pol}
P_a:=c_a\prod_{i=1}^{\frac{n}{2}+1}\left(\frac{x_{2i-2}^{d-1}-(a_{2i-2}x_{2i-1})^{d-1}}{x_{2i-2}-a_{2i-2}x_{2i-1}}\right)    
\end{equation} 
where $c_a\in\C^\times$ will be defined later. Let $\delta:=[\P^\frac{n}{2}_\alpha]$ be the linear cycle given by \eqref{lincycle} whose associated polynomial $P_\delta$ is given by \eqref{plinfermat}. By Proposition \ref{prop8} the intersection of $\lambda$ with $\delta$ is equal (up to multiplication by a non-zero rational number) to
\begin{equation}
\label{intnumb}
c_a\cdot c_\alpha:=c_a  c_\delta\cdot \zeta_{2d}^{\alpha_0+\cdots+\alpha_n}\prod_{i=1}^{\frac{n}{2}+1}\left(\frac{a_{{2i-2}}^{d-1}-(\zeta_{2d}^{\alpha_{2i-2}})^{d-1}}{a_{{2i-2}}-\zeta_{2d}^{\alpha_{2i-2}}}\right).
\end{equation}
Fixing an $\widetilde\alpha$ such that $c_{\widetilde\alpha}\neq 0$, let us define $c_a:=c_{\widetilde\alpha}^{-1}$. Now it is clear that \eqref{intnumb} is rational for any $\alpha$, since every $a_{2i-2}$ satisfies \eqref{arthcond1}. With a little more effort it is possible to show that in fact the intersection of $\lambda_a$ with all linear cycles contained in the Fermat variety (which can be obtained from the $\P^\frac{n}{2}_\alpha$'s after permuting the coordinates of $\P^{n+1}$) is also a rational number. Since 
$$
(J^{F}:P_a)=\langle x_0-a_0x_1,\ldots,x_n-a_nx_{n+1},x_1^{d-1},\ldots,x_{n+1}^{d-1}\rangle
$$
it follows as in the proof of Corollary \ref{igualjotas} that every pair $\lambda_a$, $\lambda_b$ with $a\neq b$ are $\C$-linearly independent, and so they are all different.

\section{Proof of Theorem \ref{thm2}}
\label{sec5}
Let $0\in \HL_{n,d}$ be the Fermat variety, i.e. $X_0=\{F=0\}$ for $F=x_0^d+\cdots+x_{n+1}^d$. Let $0\in\Delta\subseteq T$ be a polydisc and $\lambda\in \Gamma(\Delta,R^n\pi_*\Z)$ be such that $0\in V_\lambda$ but \eqref{igualdad1} does not hold. Let us prove the inequality \eqref{desteo2} for $d\ge \text{max}\{2+\frac{6}{n},4\}$. Consider any monomial order and take $x^\alpha\in \C[x_0,\ldots,x_{n+1}]_\sigma\setminus \langle \LT(J^{J,\lambda})\rangle_\sigma$ for 
$
\sigma=(d-2)(\frac{n}{2}+1)=\soc(J^{F,\lambda}).
$
It follows from Proposition \ref{hilb} and Remark \ref{comp} that 
\begin{equation}
\label{dessalfa2}
\dim_\C \ R^{F,\lambda}_d\ge \#S^d_\alpha.
\end{equation}
\noindent\textbf{Case 1:} If $x^\alpha$ is not of the form \begin{equation}
\label{monmin2}    
x^\alpha=x_{i_1}^{d-2}\cdots x_{i_{\frac{n}{2}+1}}^{d-2}
\end{equation}
for some $0\le i_1<\cdots<i_{\frac{n}{2}+1}\le n+1$, then \eqref{desteo2} follows from \eqref{dessalfa2} and Proposition \ref{prophos}. 

\bigskip

\noindent\textbf{Case 2:} If $x^\alpha$ is of the form \eqref{monmin2} we can assume $x^\alpha=x_1^{d-2}x_3^{d-2}\cdots x_{n+1}^{d-2}$ after some relabeling of the variables. Since \eqref{dessalfa2} is strict in this case, there must exist some $x^\beta\in \C[x_0,\ldots,x_{n+1}]_d\setminus (\langle\LT(J^{F,\lambda})\rangle_d\cup S^d_\alpha)$. In consequence there exists some $i$ even such that $x_i|x^\beta$ and so $x_i\in \C[x_0,\ldots,x_{n+1}]_1\setminus (\langle\LT(J^{F,\lambda})\rangle_1\cup S^1_\alpha)$, i.e. $\dim_\C \ R^{F,\lambda}_1>\#S^1_\alpha$. By duality $\dim_\C \ R^{F,\lambda}_{\sigma-1}>\#S^{\sigma-1}_\alpha$ and so there exists some $x^{\alpha'}\in \C[x_0,\ldots,x_{n+1}]_{\sigma-1}\setminus (\langle\LT(J^{F,\lambda})\rangle_{\sigma-1}\cup S^{\sigma-1}_\alpha)$. By Proposition \ref{hilb} it follows that 
$$
\dim_\C \ R^{F,\lambda}_d\ge \#(S^d_\alpha\cup S^d_{\alpha'})=\#S^d_\alpha+\#(S^d_{\alpha'}\setminus S^d_\gamma)
$$
where $\gamma_i=\text{min}\{\alpha_i,\alpha'_i\}$ for every $i=0,\ldots,n+1$. Let us divide the analysis into two cases: 

\bigskip

\noindent\textbf{Case 2.1:} If 
$$
\#\{i\in\{0,\ldots,n+1\}:\alpha'_i=d-2\}=\frac{n}{2},
$$
then by Proposition \ref{prophos} $\#S^d_{\alpha'}\ge \#S^d_{\alpha_0}$ for $\alpha_0:=(0,\ldots,0,1,d-4,d-2,\ldots,d-2)$. Also in this case, up to some relabeling of the variables we can assume $x^\gamma|x_1^{d-4}(x_3x_5\cdots x_{n+1})^{d-2}$, and so $\#S^d_\gamma\le \#S^d_{\gamma_0}$ for $\gamma_0:=(0,\ldots,0,d-4,d-2,\ldots,d-2)$ with $\text{deg}(x^{\gamma_0})=\sigma-2$. Therefore
$$
\text{dim}_\C\text{ }R^{F,\lambda}_d\ge \#S^d_\alpha+\#S^d_{\alpha_0}-\#S^d_{\gamma_0}={\frac{n}{2}+d\choose d}+{\frac{n}{2}+d-1\choose d-1}-\left(\frac{3n^2}{8}+\frac{9n}{4}+2\right).
$$

\noindent\textbf{Case 2.2:} If
$$
\#\{i\in\{0,\ldots,n+1\}:\alpha'_i=d-2\}<\frac{n}{2}.
$$
Again by Proposition \ref{prophos} we have $\#S^d_\alpha\ge \#S^d_{\alpha_1}$ for $\alpha_1:=(0,\ldots,0,1,d-3,d-3,d-2,\ldots,d-2)$, and up to relabeling of the variables we have that $x^\gamma|(x_1x_3)^{d-3}(x_5x_7\cdots x_{n+1})^{d-2}$, and so $\#S^d_\gamma\le \#S^d_{\gamma_1}$ for $\gamma_1:=(0,\ldots,0,d-3,d-3,d-2,\ldots,d-2)$ with $\text{deg}(x^{\gamma_1})=\sigma-2$. Then 
$$
\text{dim}_\C\text{ }R^{F,\lambda}_d\ge \#S^d_\alpha+\#S^d_{\alpha_1}-\#S^d_{\gamma_1}={\frac{n}{2}+d\choose d}+{\frac{n}{2}+d-1\choose d-1}-\left(\frac{n^2}{4}+\frac{5n}{2}+2\right)
$$
$$
\ge {\frac{n}{2}+d\choose d}+{\frac{n}{2}+d-1\choose d-1}-\left(\frac{3n^2}{8}+\frac{9n}{4}+2\right)
$$
as desired.

\bigskip

%\noindent\textbf{Case 2:} Assume now that $d=3\ge 2+\frac{6}{n}$. 

%\bigskip

%We can assume up to relabeling that $x^\alpha=x_1x_3\cdots x_{n+1}$. By a similar argument as in Case 2 we can assume that there exists $x^{\alpha'}=x_0x_5x_7\cdots x_{n+1}\in\C[x_0,\ldots,x_{n+1}]_{\sigma-1}\setminus (\langle\LT(J^{F,\lambda})\rangle_{\sigma-1}\cup S^{\sigma-1}_\alpha)$. By Proposition \ref{hilb} we get the desired bound $$\text{dim}_\C\text{ }R^{F,\lambda}_d\ge \#(S^d_\alpha\cup S^d_{\alpha'})={\frac{n}{2}+1\choose 3}+{\frac{n}{2}-1\choose 2}.$$

Let us analyze now the equality in \eqref{desteo2} for $n\ge 4$ (the case $n=2, d\ge 5$ was established by \cite{voisin89}): When the equality of \eqref{desteo2} holds, by a similar argument as in the equality of \eqref{igualdad1} (see the proof of Proposition \ref{prop6}) in each of the three cases considered above we can compute $\langle\LT(J^{F,\lambda})\rangle_k$ for $k\ge d$ and corresponds respectively in Case 1, Case 2.1 and Case 2.2 (up to some relabeling of the variables) to
\begin{equation}
\langle\LT(J^{F,\lambda})\rangle_k=\langle x_0,x_2,\ldots,x_{n-2},x_n^2,x_1^{d-1},x_3^{d-1},\ldots,x_{n-1}^{d-1},x_{n+1}^{d-2}\rangle_k,    
\end{equation}
\begin{equation}
\langle\LT(J^{F,\lambda})\rangle_k=\langle x_0,x_2,\ldots,x_{n-2},x_n^2,x_1^{d-1},x_3^{d-1},\ldots,x_{n-1}^{d-1},x_{n+1}^{d-1},x_nx_{n+1}^{d-3}\rangle_k,   \end{equation}
\begin{equation}
\label{descartar}
\langle\LT(J^{F,\lambda})\rangle_k=\langle x_0,x_2,\ldots,x_{n-2},x_n^2,x_1^{d-1},x_3^{d-1},\ldots,x_{n-1}^{d-1},x_{n+1}^{d-1},x_nx_{n-1}^{d-2},x_nx_{n+1}^{d-2}\rangle_k.    
\end{equation}
By duality, these equalities are also valid for $k\le \sigma-d$. The hypothesis $n,d\ge 4$ implies that $d-2\le\sigma-d$ (also note that for $n\ge 4$, the inequality \eqref{desteo2} is strict in the Case 2.2, then we do not consider the case \eqref{descartar}) thus in any case  $\forall  k=0,\ldots,d-2$
\begin{equation}
\label{eqfin1.1}    
J^{F,\lambda}_k=\langle L_0,L_2,\ldots,L_{n-2},C_n,D_{n+1}\rangle_k,
\end{equation}
for some homogeneous polynomials with $\LT(L_i)=x_i$, for $i=0,2,\ldots,n-2$, $\LT(C_n)=x_n^2$ and $deg(D_{n+1})=d-2$. Therefore there exists a complete intersection of type $(1,1,\ldots,1,2)$ given by $Z:=\{L_0=\cdots =L_{n-2}=C_n=0\}$ such that $I(Z)\subseteq J^{F,\lambda}$. And if we assume further that $F\in (J^{F,\lambda})^2$ and $d\ge 5$, then 
$$
F=f_1g_1+\cdots+f_mg_m
$$
for some homogeneous polynomials $f_i,g_i\in J^{F,\lambda}$. Without loss of generality we can assume that $deg(f_i)\le deg(g_i)$, then $deg(f_i)\le d-3$ and so $f_i\in \langle L_0,L_2,\ldots,L_{n-2},C_n\rangle$ for every $i=1,\ldots,m$, then we can rewrite
$$
F=L_0P_0+L_2P_2+\cdots+L_{n-2}P_{n-2}+C_nQ_n
$$
for $P_0,\ldots,P_{n-2},Q_n\in J^{F,\lambda}$. This implies that $Z\subseteq X_0$ and also that
$$
J^{F,[Z]}=\langle L_0,L_2,\ldots,L_{n-2},C_n,P_0,\ldots,P_{n-2},Q_n\rangle\subseteq J^{F,\lambda}.
$$
Since both ideals are Artinian Gorenstein of the same socle it follows that $J^{F,[Z]}=J^{F,\lambda}$, and by Corollary \ref{igualjotas} we get that $\lambda_\prim=a[Z]_\prim$ for some $a\in \Q^\times$.

\section{Final comments}
\label{sec6}
Despite of Theorem \ref{thm1}, Movasati's conjecture remains open for $d=3,4,6\ge 2+\frac{6}{n}$, and we believe it is also true in these cases. We describe two possible ways to prove this. The first possibility is to follow our method of exploiting the rationality of the Hodge cycle $\lambda$. After \cite{VillaPCIAC} we know how to compute the intersection of $\lambda_\prim$ with other known algebraic cycles inside the Fermat variety. But this method does not tell us how to intersect $\lambda_\prim$ with all rational cohomology classes. We know that all the primitive part of the de Rham cohomology of $X=\{F=0\}\subseteq\P^{n+1}$ is given by Griffiths' basis, i.e. is generated by elements of the form
$$
\res\left(\frac{P\Omega}{F^{q+1}}\right)\in F^{n-q}H^n_\dR(X)_\prim.
$$
And so, even if we generalize Proposition \ref{prop8} to any pair of forms of Griffiths' basis, it not clear at all which combinations of these elements are rational classes. Dually, instead of computing the intersection products, we would try to compute the periods over rational or integral homology cycles. This approach would work at least computationally since we can construct a basis for $H_n(X,\Z)$ in terms of the \textit{vanishing cycles} (also called \textit{Pham cycles}) of $U:=X\setminus\P^n_\infty$ and the \textit{cycle at infinity} $Z_\infty=X\cap\P^n_\infty$. The vanishing cycles are given explicitly and it is possible to compute all the periods of Griffiths' basis over them, see \cite[Lemma 7.12]{dmos} and \cite[Proposition 15.1]{ho13}. The problem is that this method only gives us the periods of the whole 
$$
\omega_\lambda:=\res\left(\frac{P_\lambda\Omega}{F^{\frac{n}{2}+1}}\right)\in F^\frac{n}{2}H^n_\dR(X)_\prim
$$
which are not necessarily rational numbers (the numbers that are rational are the periods of its $(\frac{n}{2},\frac{n}{2})$ piece). Therefore, we still need to know how to determine explicitly some $P\in\C[x_0,\ldots,x_{n+1}]_{d\frac{n}{2}-n-2}$ (in terms of $P_\lambda$) such that 
$$
\omega_\lambda-\omega_\lambda^{\frac{n}{2},\frac{n}{2}}=\res\left(\frac{P\Omega}{F^\frac{n}{2}}\right)\in F^{\frac{n}{2}+1}H^n_\dR(X)_\prim.
$$
The second method is trying to use Voisin's geometric argument \cite[\S 3]{voisin1988}. In order to do this, we need at least to show that $\text{dim}_\C\text{ }J^{F,\lambda}_1$ is locally constant at the Fermat point through every local Hodge loci $V_\lambda$ satisfying \eqref{igualdad1}. After Theorem 1, this ended up being the case for $d\neq 3,4,6$. Furthermore it turns out that the whole Hilbert function of $R^{F,\lambda}$ is locally constant at Fermat. If we show this continuity of the Hilbert function of $R^{F,\lambda}$ at least for degrees 1 and 2, we would be able to characterize the components attaining the equality in \eqref{desteo2} as the one corresponding to a complete intersection of type $(1,\ldots,1,2)$. We want to point out that Voisin's argument never uses the rationality of $\lambda$, and so if we take a class $\mu\in H^{\frac{n}{2},\frac{n}{2}}(X)_\prim$ and consider its associated $F^\frac{n}{2}$-locus
$$
V_\mu^{\frac{n}{2}}=\{t\in T: \mu_t\in F^\frac{n}{2}H^n_\dR(X_t)\},
$$
where $\mu_t$ is the unique flat deformation of $\mu$ with respect to the Gauss-Manin connection, then all classes $\mu$ satisfying \eqref{igualdad1} (e.g. the classes described in \S \ref{sec5.5}) with $\text{dim}_\C\text{ }J^{F,\mu}_1$ locally constant are rational and so Hodge. Since we can construct easily $(\frac{n}{2},\frac{n}{2})$ classes with non rational primitive part $\mu$ satisfying \eqref{igualdad1} (just define it as \eqref{lambdaa} with random values $a_i\in\C$ for $i=0,2,\ldots,n$) we see that the continuity of the Hilbert function of $R^{F,\mu}$ fails in general for any $F^\frac{n}{2}$-locus, and so the continuity of this function (even for some degrees) is strongly related to the Hodge condition. In view of the variational Hodge conjecture we can ask ourselves if the Hilbert function of $R^{F,[Z]}$ is continuous for any flat family $Z\rightarrow T$ of algebraic cycles $[Z_t]\in \CH^\frac{n}{2}(X_t)$. This is clear in certain cases such as families of complete intersection cycles, but it is not clear to the author in the general case.

%\bigskip
%\newpage
%\bibliography{biblio}
%\bibliographystyle{alpha}

\end{document}